\newcommand{\shrinkmargins}[1]{
  \addtolength{\textheight}{#1\topmargin}
  \addtolength{\textheight}{#1\topmargin}
  \addtolength{\textwidth}{#1\oddsidemargin}
  \addtolength{\textwidth}{#1\evensidemargin}
  \addtolength{\topmargin}{-#1\topmargin}
  \addtolength{\oddsidemargin}{-#1\oddsidemargin}
  \addtolength{\evensidemargin}{-#1\evensidemargin}
  }
\newtheorem{theorem}{Theorem}
\newtheorem{lemma}[theorem]{Lemma}
\newtheorem{corollary}[theorem]{Corollary}
\newtheorem*{theorem*}{Theorem}
\newtheorem{conjecture}{Conjecture}
\newtheorem{proposition}[theorem]{Proposition}
{Claim}
\newtheorem*{definition}{Definition}
\theoremstyle{remark}
\newtheorem*{remarks}{{\bf Remarks}}
\numberwithin{theorem}{section} \numberwithin{equation}{section}
\def\func#1{\mathop{\rm #1}}%
\begin{document}
\title[Plane Partitions]{Inequalities for Plane Partitions}
\author{Bernhard Heim }
\address{Lehrstuhl A f\"{u}r Mathematik, RWTH Aachen University, 52056 Aachen, Germany}
\email{bernhard.heim@rwth-aachen.de}
\author{Markus Neuhauser}
\address{Kutaisi International University, 5/7, Youth Avenue,  Kutaisi, 4600 Georgia}
\email{markus.neuhauser@kiu.edu.ge}
\author{Robert Tr{\"o}ger}
\email{robert@silva-troeger.de}
\subjclass[2010] {Primary 05A17, 11P82; Secondary 05A20}
\keywords{Inequalities, Plane Partitions, Polynomials}
\begin{abstract}
Inequalities are important features in the context
of sequences of numbers and polynomials.
The Bessenrodt--Ono inequality  for
partition numbers and Nekrasov--Okounkov polynomials 
has only recently been discovered. In this paper we study the
log-concavity (Tur\'{a}n inequality) and Bessenrodt--Ono inequality for
plane partitions and their polynomization.
\end{abstract}
\maketitle
\section{Introduction and Main Results}
In this paper we address inequalities for plane partitions and
their polynomization.
Plane partitions are, according to Stanley, fascinating generalizations of 
partitions of integers (\cite{St99}, Section 7.20). Andrews \cite{An98} gave an excellent introduction
of plane partitions in the context of higher-dimensional partitions. We also refer to Krattenthaler's
survey on plane partitions in the work of Stanley and his school
\cite{Kr16}.

A plane partition $\pi$ of $n$ is an array 
$\pi = \left( \pi_{ij} \right)_{i,j \geq 1}$ of non-negative integers $\pi_{ij}$
with finite sum $\vert \pi \vert := \sum_{i,j=1} \pi_{ij}=n$, which is weakly decreasing in rows and columns.
It can be considered as the filling of a Ferrers diagram with weakly decreasing rows and columns,
where the sum of all these numbers is equal to $n$.
Let the numbers in the filling represent the 
heights for stacks of blocks placed on each cell of the diagram (Figure \ref{cube}). 
This is a natural generalization of the concept of
classical partitions \cite{An98, On03}.

\begin{figure}[H]

\begin{minipage}{0.35\textwidth}
\begin{equation*}
\phantom{xxx}
\young(5443321,432,21) 
\end{equation*}
\end{minipage}
$\longrightarrow$ \phantom{xx}
\begin{minipage}{0.4\textwidth}\phantom{xx}
\includegraphics[width=0.75\textwidth]{./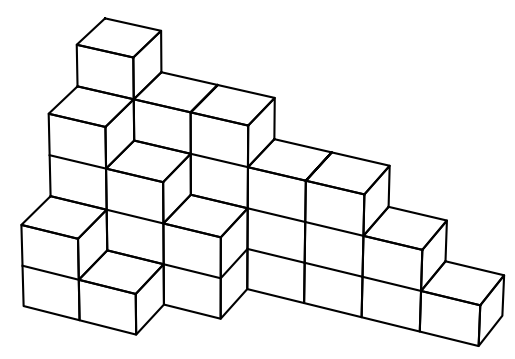}  
\end{minipage}
\caption{\label{cube}Representation of plane partitions.}
\end{figure}

Let ${p}(n)$ denote the number of partitions of $n$ and $\func{pp}(n)$ the number of plane partitions of $n$.
As usual, ${p}\left( 0\right):=1$ and $\func{pp} \left(0 \right): =1$.
In Table \ref{partition} we have listed the first values
of the partition and plane partition function.
\begin{table}[H]
\[
\begin{array}{cccccccccccc}
\hline
n & 0 & 1 & 2 & 3 & 4 & 5 & 6 & 7 & 8 & 9 & 10 \\ \hline \hline
p\left( n \right) & 1 & 1 & 2 &
3 &
5 &
7 &
11 &
15 &
22 & 30 & 42 \\
\func{pp} \left( n \right) & 1 & 1 & 3 &
6 &
13 &
24&
48 &
86 &
160 & 282 & 500 \\
\hline
\end{array}
\]
\caption{\label{partition}
Values for $ 0 \leq n \leq 10$.}
\end{table}
We investigate
the log-concavity \cite{Br89, St89} and the
Bessenrodt--Ono inequality \cite{BO16, HNT20} of plane partitions.
A sequence of real, non-negative numbers $\{\alpha(n)\}_{n=0}^{\infty}$ is log-concave if for all $n \in \mathbb{N}$:
\begin{equation}\label{lc}
\alpha(n)^2 > \alpha(n-1) \, \alpha(n+1).
\end{equation}
We say log-concave at $n$ if (\ref{lc}) is satisfied for a specific $n$.
A sequence of real non-negative numbers $\{\alpha(n)\}_{n=0}^{\infty}$ satisfies the
Bessenrodt--Ono inequality 
for a set $S \subset \mathbb{N} \times \mathbb{N}$,
if for all $(a,b) \in S$:
\begin{equation}\label{bo}
\alpha(a) \, \alpha(b) > \alpha(a+b).
\end{equation}
\subsection{Related Work and Recent Results}
We recall that Nicolas \cite{Ni78} proved that the partition function is log-concave for $n > 25$:
\begin{equation}\label{lc:partition}
{p}(n)^2 > {p}(n-1) \, {p}(n+1).
\end{equation}
It is valid for all $n$ even and fails for $ 1 \leq n \leq 25$ odd.
In the course of proving several conjectures of Chen and Sun, DeSalvo and Pak \cite{DP15}
reproved this result. They remarked that due to the
Hardy--Ramanujan asymptotic formula
\[
p(n) \sim \frac{1}{4 \sqrt{3n}} \, e^{\pi \sqrt{\frac{2}{3}n}} \quad \text{ as } n
\rightarrow \infty,
\]
there is no way of knowing precisely when the asymptotic formula dominates the calculation.
Their proof is based on Rademacher type estimates
\cite{Ra37}
by Lehmer
(e.~g.\ \cite{Le38,Le39}), which provide the
demanded, explicit, guaranteed error estimate. This proves the log-concavity for $n \geq 2,600$
(\cite{DP15}). More generally, let $p_k(n)$ be the $k$-colored partition function, obtained for
every $k \in \mathbb{N}$ by the generating function $$\prod_{n=1}^{\infty} \left(1 - q^n \right)^{-k}= \sum_{n=0}^{\infty} p_k(n) \, q^n,$$
which is essentially the $k$th
power of the reciprocal of the Dedekind eta function \cite{On03}.
Then (\ref{lc:partition}) was extended by Chern--Fu--Tang \cite{CFT18} to an interesting conjecture:
Suppose $k,\ell ,n \in \mathbb{N}$ with $k \geq 2$ and
$n >\ell $. Let $\left( k,n,\ell \right) \neq (2,6,4)$, then
\[
p_k(n-1) \, p_{k}\left( \ell +1\right) \geq p_k(n) \, p_{k}\left( \ell \right) .
\]
The conjecture was extended to $k \in \mathbb{R}_{\geq 2}$ \cite{HN21A}, which involves the
so-called D'Arcais polynomials or Nekrasov--Okounkov
polynomials \cite{NO06, Ha10}.
The Conjecture by Chern--Fu--Tang
and some portion of the Conjecture by Heim--Neuhauser
was recently proven by Bringmann, Kane, Rolen, and Tripp \cite{BKRT21}.
The proof is based on Rademacher type formulas for the coefficients of 
powers of the Dedekind eta function, utilizing the weak modularity property.

Bessenrodt and Ono \cite{BO16} discovered a beautiful and simple inequality for
partition numbers.
Let $a,b \in \mathbb{N}$. Suppose $a,b \geq 2$ and $a+b \geq 10$, then
\begin{equation}\label{bo: partition}
p(a) \, p(b) > p(a+b).
\end{equation}
The inequality is symmetric and always fails for $a=1$ or $b=1$. Let $2 \leq b \leq a$.
There is equality for the pairs $(4,3),(6,2),(7,2)$ and the 
opposite inequality of
(\ref{bo: partition}) is exactly true
for the pairs $(2,2), (3,2), (4,2), (5,2), (3,3), (5,3)$. Bessenrodt and Ono's proof
is based on
an
analytic result of Lehmer \cite{Le39} of Rademacher type, similar to the proof of the log-concavity
of $p(n)$ \cite{Ni78, DP15}. Shortly after the result was published,
Alanazi, Gangola III,
and Munagi \cite{AGM17} came up with a subtle combinatorial proof.
Chern, Fu, and Tang \cite{CFT18} generalized and proved the
Bessenrodt--Ono inequality to $k$-colored partitions.
In \cite{HNT20} this was extended to
$k$ real, again involving polynomials. The proof was given by induction and
involving derivatives. Further, the work of Bessenrodt and Ono triggered the results of
Beckwith and Bessenrodt \cite{BB16} on $k$-regular partitions,
Hou and Jagadeesan \cite{HJ18} on the numbers of partitions with ranks in a given residue class modulo $3$ and 
Males \cite{Ma20} for general $t$,
and Heim and Neuhauser \cite{HN19}, and
Dawsey and Masri \cite{DM19} for the Andrews {\it{spt}}-function.

We performed several numerical experiments and are convinced that some
of the recorded results can be transferred to plane partitions $\func{pp}(n)$ and its generalization.
MacMahon \cite{Ma97,Ma99,Ma60} proved the following non-trivial result, which took him several years.
The generating function of the plane partition is given by
\[
\prod_{n=1}^{\infty} \left( 1- q^n \right)^{-n} = \sum_{n=0}^{\infty} \func{pp}(n) \, q^n.
\]
Since this generating function is not related to a weakly modular form, in contrast to the
partition numbers, we have only an asymptotic formula provided by Wright \cite{Wr31}
based on the circle and saddle point method 
for plane partition numbers. 
Wright proved the following asymptotic behavior as
$n$ goes to infinity:
\begin{equation}\label{Wright}
\func{pp}(n) \sim   \frac{\zeta(3)^{\frac{7}{36}}}{\sqrt{12 \pi}} 
\left( \frac{2}{n} \right)^{\frac{25}{36}} \exp
\left(  3 \, \zeta(3)^{\frac{1}{3}} \left( \frac{n}{2}\right)^{\frac{2}{3}}+ \zeta'(-1)\right).
\end{equation}
Here $\exp \left( z\right) = \mathrm{e}^z$
and $\zeta(s)$ denotes the Riemann zeta function.
Thus, we are in a similar situtation as described before by
DeSalvo and Pak \cite{DP15} for partition numbers.

Recently, we invented a new proof method \cite{HN21B} and reproved some known results related to the
Bessenrodt--Ono
inequality for the partition function, the $k$-colored
partitions and  extension to the
D'Arcais polynomials. 
\subsection{Main Results: Plane Partitions}
We first start with the Bessenrodt--Ono
inequality (\ref{bo}).
\begin{theorem}[Bessenrodt--Ono inequality] 
\label{BO: plane} Let $a$ and $b$ be positive integers. 
Let $a,b \geq 2$ and $a+b \geq 12$. Then
\begin{equation*}
\func{pp}(a) \, \func{pp}(b) > \func{pp}(a+b).
\end{equation*}
Equality is never satisfied.
\end{theorem}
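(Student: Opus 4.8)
The plan is to reduce the two-variable inequality $\func{pp}(a)\func{pp}(b)>\func{pp}(a+b)$ to a one-variable statement and then to a finite check supplemented by an asymptotic/monotonicity argument. First I would fix the smaller variable: writing $b\le a$, it suffices by symmetry to treat $2\le b\le a$. The key monotonicity input I would establish is that for fixed $b\ge 2$ the ratio $\func{pp}(n)/\func{pp}(n-b)$ grows, i.e.\ the "discrete $b$-th difference in logarithm'' is increasing in $n$; this follows from the fact that $\func{pp}$ is log-concave for $n$ large together with a finite verification at small $n$, or more directly from positivity of the coefficients in $\prod_n(1-q^n)^{-n}$ and a telescoping estimate. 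Granting that, the inequality $\func{pp}(a)\func{pp}(b)>\func{pp}(a+b)$ for all $a\ge b$ reduces to the single critical case $a=b$ (the "diagonal''), or to $a\in\{b,b+1\}$, because $\func{pp}(a+b)/\func{pp}(a)$ is decreasing past the worst case while $\func{pp}(b)$ stays fixed.

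Next I would treat the diagonal $\func{pp}(m)^2>\func{pp}(2m)$ asymptotically. Using Wright's formula \eqref{Wright}, $\log\func{pp}(n)=C\,n^{2/3}+O(\log n)$ with $C=3\zeta(3)^{1/3}2^{-2/3}$, so
\[
\log\func{pp}(m)^2-\log\func{pp}(2m)=2Cm^{2/3}-C(2m)^{2/3}+O(\log m)=(2-2^{2/3})C\,m^{2/3}+O(\log m),
\]
and since $2-2^{2/3}>0$ the right side tends to $+\infty$. The same computation handles $\func{pp}(a)\func{pp}(b)>\func{pp}(a+b)$ for general $a,b\ge 2$: the main term is $C(a^{2/3}+b^{2/3}-(a+b)^{2/3})$, which is strictly positive by strict concavity of $t\mapsto t^{2/3}$ together with $\min(a,b)\ge 2$ forcing a definite gap. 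The role of the hypothesis $a+b\ge 12$ is exactly to push us past the finitely many exceptional pairs, in analogy with the $a+b\ge 10$ threshold in \eqref{bo: partition}.

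The remaining, and genuinely laborious, step is to make the asymptotics effective: I need an explicit $N_0$ and an explicit error bound in Wright's formula valid for all $n\ge N_0$ so that the positivity of the main term is not swamped. Here I would invoke (or reprove, in Rademacher/Wright style) an error term of the shape $\func{pp}(n)=\text{(Wright main term)}\bigl(1+O(n^{-\delta})\bigr)$ with a fully explicit implied constant; this is the analogue of Lehmer's estimates used by Bessenrodt--Ono and DeSalvo--Pak, and it is the part most likely to require care since, unlike $p(n)$, $\func{pp}(n)$ has no modular machinery behind it. Once $N_0$ is in hand, I close the gap between $12$ and $N_0$ (and the boundary cases $a=b$, $a=b+1$ up to $N_0$) by direct computation from the generating function, exhibiting the table of values and checking \eqref{bo} pair by pair; the strictness of every inequality in that finite range gives "equality is never satisfied.'' The main obstacle, to repeat, is producing a clean explicit remainder in Wright's asymptotic with a small enough $N_0$ that the finite check stays humane.
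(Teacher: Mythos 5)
Your sketch stalls exactly at the point you yourself flag as the ``main obstacle,'' and that obstacle is not a routine technicality here: it is the whole difficulty of the theorem. Your argument needs (i) an explicit remainder term in Wright's asymptotic (\ref{Wright}) with a computable constant and threshold $N_0$, and (ii) log-concavity of $\func{pp}(n)$ beyond an explicit point in order to reduce to the diagonal/near-diagonal cases. Neither ingredient is available: since $\prod_n(1-q^n)^{-n}$ is not (weakly) modular, there is no Rademacher-type exact formula and no Lehmer-style error bound for $\func{pp}(n)$, which is precisely why the paper's own log-concavity statement (Theorem \ref{asymptotic}) is only proved with an \emph{ineffective} $N$, and why log-concavity for all $n\geq 12$ remains Conjecture \ref{planeconjecture}. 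So your reduction step leans on a statement that is itself only conjectural (or ineffective) in this setting, and your ``invoke or reprove an explicit Wright remainder'' step would amount to redoing Wright's circle/saddle-point analysis with explicit constants --- a substantial piece of analysis you do not supply, and without which no finite check can be specified. As written, the proposal therefore has a genuine gap rather than a complete alternative route.

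For contrast, the paper avoids asymptotics altogether. It proves Theorem \ref{BO: plane} by induction on $a+b$ using MacMahon's recurrence $n\,\func{pp}(n)=\sum_{k=1}^{n}\sigma_2(k)\,\func{pp}(n-k)$: the difference $\func{pp}(a)\func{pp}(b)-\func{pp}(a+b)$ is decomposed as $L+R_1+R_2+R_3$ (Section \ref{Strategy}), each piece is bounded elementarily using $n^2\leq\sigma_2(n)<2n^2$, the monotonicity (\ref{HNT Theorem}), the induction hypothesis, and a small table of the finitely many negative contributions (Table \ref{pp}); this yields positivity once $a\geq 237$, and the range $12\leq a+b\leq 472$ is verified by computer. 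If you want to salvage your analytic route, you would first have to produce the effective version of (\ref{Wright}); otherwise the inductive/recurrence approach is the one that actually closes the argument.
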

Due to symmetry, let us assume that $2 \leq b \leq a$. Then $\func{pp}(a) \, \func{pp}(b) < \func{pp}(a+b)$
for
$(a,b) \in \left\{ \left( a,2\right) \, : \, 2 \leq a
\leq 9\right\} \, \cup \left\{ \left( a,3\right) \, : \, 2 \leq a
\leq 5\right\} $.
Note that $\func{pp}(n) < \func{pp}(n+1)$, similar to $p(n) < p(n+1)$. 
We have 
$$\left(\func{pp}(3)\right)^2 <  \func{pp}(3+3) 
\text{ and } 
\left(\func{pp}(4)\right)^2 > \func{pp}(4+4).$$
Based on our investigations we state the following
\begin{conjecture}\label{planeconjecture}
Let $n \geq 12$. Then the sequence $\{\func{pp}(n)\}_n$ of plane partitions is log-concave.
\begin{equation}
\func{pp}(n)^2 > \func{pp}(n-1) \, \func{pp}(n+1).
\label{eq:log-concave-plane}
\end{equation}
\end{conjecture}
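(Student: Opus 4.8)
The plan is to follow the effective-asymptotics template that DeSalvo and Pak \cite{DP15} used for $p(n)$: combine a refined version of Wright's formula (\ref{Wright}) with explicit error bounds to deduce (\ref{eq:log-concave-plane}) for all $n$ beyond an explicit threshold $N_0$, and then verify the finitely many cases $12 \le n \le N_0$ by direct computation of $\func{pp}(n)$ from MacMahon's product. Writing $f(n) := \log \func{pp}(n)$, the inequality (\ref{eq:log-concave-plane}) is exactly the discrete concavity statement
\[
\Delta^2 f(n) := f(n+1) - 2 f(n) + f(n-1) < 0.
\]
Set $A := 3\,\zeta(3)^{1/3} / 2^{2/3}$ and $C := \zeta(3)^{7/36}\, 2^{25/36}\, \mathrm{e}^{\zeta'(-1)} / \sqrt{12\pi}$, so that the main term of (\ref{Wright}) is $M(n) := C\, n^{-25/36}\, \exp\!\big(A\, n^{2/3}\big)$ and $\func{pp}(n) = M(n)\big(1 + E(n)\big)$ with $E(n) \to 0$.

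First I would isolate the smooth main term $g(x) := \log M(x) = A\, x^{2/3} - \tfrac{25}{36}\log x + \log C$ and show that it is genuinely concave in the range we need. A direct computation gives
\[
g''(x) = -\tfrac{2A}{9}\, x^{-4/3} + \tfrac{25}{36}\, x^{-2},
\]
which is strictly negative as soon as $x^{2/3} > 25/(8A)$; numerically $A \approx 2.01$, so $g''(x) < 0$ for all $x \ge 2$. By Taylor's theorem $\Delta^2 g(n) = g''(\xi_n)$ for some $\xi_n \in (n-1, n+1)$, so the main term alone supplies a negative second difference of order $n^{-4/3}$ with an explicit constant. This is the ``signal'' that must survive the passage from $g$ to $f$.

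The crux is controlling the error. Since $f = g + \log(1+E)$, concavity holds once $\big|\Delta^2 \log(1+E(n))\big|$ is smaller than the magnitude of the negative main term $\Delta^2 g(n)$, which is only of order $n^{-4/3}$. This is demanding: a crude triangle-inequality bound on the second difference of $\log(1+E)$ would control it only by order $\epsilon(n) := \sup|E|$, forcing $\epsilon(n)$ well below order $n^{-4/3}$, far beyond what the leading term of (\ref{Wright}) provides. The way through is to upgrade (\ref{Wright}) to a full asymptotic expansion
\[
\func{pp}(n) = M(n)\Big(1 + c_1 n^{-1/3} + c_2 n^{-2/3} + \cdots + c_r n^{-r/3} + O\big(n^{-(r+1)/3}\big)\Big)
\]
with explicit coefficients $c_j$ and explicit implied constant, carried through the $n^{-4/3}$ term (an $r$ with $(r+1)/3 > 4/3$), so that the relative remainder is $O(n^{-5/3})$. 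The truncated expansion $\widetilde M(n)$ is smooth, and its log-concavity is governed, exactly as for $g$, by the dominant $A\,x^{2/3}$ term; its correction factor $\log(1+\sum c_j n^{-j/3})$ contributes to $\Delta^2 f$ only at order $n^{-7/3}$ and smaller, while the explicit remainder $\epsilon(n) = O(n^{-5/3})$ satisfies $4\,\epsilon(n) < \tfrac{2A}{9}\, n^{-4/3}$ for all $n$ beyond an explicit $N_0$. Together these force $\Delta^2 f(n) < 0$ for $n \ge N_0$.

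The main obstacle is precisely this effectivization. Unlike $p(n)$, the generating function $\prod_{m\ge1}(1-q^m)^{-m}$ is not (weakly) modular, so there is no Rademacher exact formula and no Lehmer-type bound to quote; the expansion must be produced directly, either by reworking Wright's circle-method and saddle-point argument while tracking every constant, or via a Meinardus-type theorem applied to this product and pushed to higher order with explicit remainder. Extracting the coefficients $c_j$ is mechanical, but bounding the remainder \emph{uniformly} with an explicit constant tight enough to cross below order $n^{-4/3}$ is delicate, and one must confirm that the resulting $N_0$ lies within computational reach. Once $N_0$ is pinned down, the remaining cases $12 \le n \le N_0$ are closed by evaluating $\func{pp}(n)$ directly, which also exhibits the failure of (\ref{eq:log-concave-plane}) just below the cutoff and confirms that $n = 12$ is the genuine onset of log-concavity rather than an artifact of the estimates.
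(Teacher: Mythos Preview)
The statement is labelled a \emph{conjecture} in the paper and is not proved there. What the paper actually establishes is the weaker Theorem~\ref{asymptotic}: (\ref{eq:log-concave-plane}) holds for all $n$ larger than some unspecified $N$, together with a direct numerical check for $12\le n\le 10^{5}$; immediately after stating that theorem the authors write that it ``would be very interesting to determine such a $N$ of reasonable size and to finally prove the conjecture.''

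Your outline and the paper's proof of the asymptotic part follow the same route. Both pass to $f(n)=\log\func{pp}(n)$, isolate the strictly concave contribution coming from the $A\,n^{2/3}$ term in Wright's formula, and argue that the second difference of the remaining factor is $o(n^{-4/3})$ once enough terms of Wright's expansion are retained. The paper carries this out in Section~6 via a general lemma: if $\alpha(n)=C_{2}n^{r}\mathrm{e}^{C_{1}n^{2/3}}\bigl(1+\gamma_{1}n^{-2/3}+\gamma_{2}n^{-4/3}+O(n^{-2})\bigr)$, then $\{\alpha(n)\}$ is eventually log-concave; Wright's formula supplies exactly this shape for $\func{pp}(n)$, giving an ineffective $N$.

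Where your plan parts from the paper is in demanding explicit constants throughout so as to produce a concrete, computationally bridgeable $N_{0}$. You correctly flag this effectivization as ``the main obstacle,'' and it is precisely the step the paper does not attempt. With no modular structure and hence no Rademacher--Lehmer exact bound available, one must rework Wright's saddle-point/circle-method argument while tracking every constant down to a remainder below order $n^{-4/3}$; neither you nor the paper do this. So your proposal, if executed, would strictly supersede what the paper proves, but as it stands it is a blueprint whose hard analytic step---the one the paper explicitly leaves open---remains unperformed.
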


We can show with Wright's formula 
(\ref{Wright}),
that
(\ref{eq:log-concave-plane}) is true for large $n$, 
and it seems that this is already true for all $n$ even and for all $n \geq 12$.
\begin{theorem}[Log-Concavity]
\label{asymptotic}Let $12 \leq n \leq 10^{5}$. Then Conjecture \ref{planeconjecture} is true.
It is further true for all $n$ even and false for all odd $n$ below $12$.
Furthermore there is an $N$ such that it is true for all $n>N$.
\end{theorem}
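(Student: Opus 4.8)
The plan is to split the statement of Theorem \ref{asymptotic} into three separate tasks: (i) the finite range $12 \le n \le 10^5$; (ii) the parity claims; and (iii) the existence of a threshold $N$ beyond which log-concavity holds.

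\medskip

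\noindent\textbf{The finite range.} First I would simply verify (\ref{eq:log-concave-plane}) numerically for $12 \le n \le 10^5$. The numbers $\func{pp}(n)$ are computed recursively from MacMahon's product $\prod_{n \ge 1}(1-q^n)^{-n} = \sum_n \func{pp}(n) q^n$; equivalently, using $\sigma_2(n) = \sum_{d \mid n} d^2$, one has the convolution recursion $n\,\func{pp}(n) = \sum_{k=1}^{n} \sigma_2(k)\,\func{pp}(n-k)$, which gives all $\func{pp}(n)$ up to $n = 10^5$ in exact integer arithmetic. Then one checks $\func{pp}(n)^2 - \func{pp}(n-1)\func{pp}(n+1) > 0$ directly. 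This is a finite computation, so nothing subtle happens; the only thing to be careful about is that the inequality is strict and that it genuinely holds at the left endpoint $n = 12$ (and fails at $n = 11, 9, 7, \ldots$, matching claim (ii) on the small odd values).

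\medskip

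\noindent\textbf{The parity claim.} For the ``all even $n$'' and ``false for odd $n < 12$'' statements, the odd part is again finite: one reads off from the same table that $\func{pp}(n)^2 \le \func{pp}(n-1)\func{pp}(n+1)$ for $n \in \{1,3,5,7,9,11\}$ (indeed the small values $1,3,6,13,24,48,86,\dots$ from Table \ref{partition} already exhibit this, e.g.\ $\func{pp}(3)^2 = 36 < 48 = \func{pp}(2)\func{pp}(4)$). The ``all even $n$'' half is not finite and would have to be folded into the threshold argument below — that is, one proves log-concavity for all $n \ge N$ by the asymptotic method, and then checks the finitely many even $n$ with $n < N$ by the same computation as in the finite range (this works provided the $N$ extracted is not astronomically larger than $10^5$; if it is, ``all even $n$'' stays at the level of the displayed remark rather than a theorem).

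\medskip

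\noindent\textbf{The threshold $N$.} The substantive part is producing an explicit $N$ with $\func{pp}(n)^2 > \func{pp}(n-1)\func{pp}(n+1)$ for $n > N$. The plan is to use Wright's asymptotic (\ref{Wright}): write $\func{pp}(n) = C\, n^{-25/36} \exp\!\bigl(3\zeta(3)^{1/3}(n/2)^{2/3}\bigr)\bigl(1 + o(1)\bigr)$ with $C = \zeta(3)^{7/36} 2^{-25/36+2/3} (12\pi)^{-1/2} e^{\zeta'(-1)}$, set $f(n) = \log \func{pp}(n)$, and compute the second difference $\Delta^2 f(n) = f(n+1) - 2f(n) + f(n-1) = \log\frac{\func{pp}(n+1)\func{pp}(n-1)}{\func{pp}(n)^2}$. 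Treating $g(x) = 3\zeta(3)^{1/3}(x/2)^{2/3} - \tfrac{25}{36}\log x + \log C$ as the smooth main term, Taylor expansion gives $\Delta^2 g(n) = g''(n) + O(g^{(4)}(n)) = -\tfrac{2}{3}\zeta(3)^{1/3} 2^{-2/3} n^{-4/3} + \tfrac{25}{36} n^{-2} + O(n^{-10/3})$, which is negative (of order $-n^{-4/3}$) for large $n$; hence $\Delta^2 f(n) < 0$, i.e.\ log-concavity, for $n$ large. The hard part — and the only place real work is needed — is that Wright's formula as stated carries no effective error term, so to get an \emph{explicit} $N$ one must either invoke a version of Wright's analysis with a quantified error bound (a second-order saddle-point expansion $\func{pp}(n) = C n^{-25/36} e^{g_0(n)}(1 + c_1 n^{-2/3} + O(n^{-4/3}))$ with the implied constant made explicit) so that the $o(1)$ in the ratio is dominated by the $-n^{-4/3}$ main term, or else accept a non-explicit $N$ from the bare asymptotic and only claim existence — which is exactly what the theorem states (``there is an $N$''). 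So for the theorem as written, plain Wright suffices: for any $\varepsilon > 0$ the ratio $\func{pp}(n+1)\func{pp}(n-1)/\func{pp}(n)^2$ equals $e^{\Delta^2 g(n)}(1 + o(1))$ with $\Delta^2 g(n) \to 0^-$ at rate $n^{-4/3}$, forcing the ratio below $1$ for all sufficiently large $n$.

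\medskip

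\noindent The main obstacle, then, is bridging the gap between $10^5$ (where the computation ends) and the $N$ coming from the asymptotics: if one wants the clean ``true for all even $n$ and all $n \ge 12$'' to be fully proved rather than conjectured, one needs an effective error term in Wright's formula small enough that $N \le 10^5$, which requires redoing the saddle-point estimate with explicit constants — a genuinely technical task. If one only wants the theorem exactly as stated (finite range by computation, parity for small odd $n$ by computation, existence of $N$ by asymptotics), then no such effort is required and the three pieces combine immediately.
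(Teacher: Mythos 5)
Your treatment of the finite range and of the odd $n<12$ failures (exact computation via the recursion $n\,\func{pp}(n)=\sum_{k=1}^n\sigma_2(k)\func{pp}(n-k)$) matches what the paper does, and your computation of the second difference of the smooth exponent $3\zeta(3)^{1/3}(n/2)^{2/3}-\tfrac{25}{36}\log n$, giving a deficit of order $n^{-4/3}$, is exactly the engine of the paper's argument. The genuine gap is your final claim that ``plain Wright suffices'' for the existence of $N$: it does not. The ratio $\func{pp}(n-1)\func{pp}(n+1)/\func{pp}(n)^2$ tends to $1$, and the smooth main term only pushes it below $1$ by an amount of size $n^{-4/3}$; a multiplicative error $1+o(1)$ with no rate is far too weak to be dominated by this. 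Concretely, a sequence $\alpha(n)=\beta(n)\bigl(1+(-1)^n n^{-1}\bigr)$ with $\beta(n)=Cn^{-25/36}\mathrm{e}^{C_1 n^{2/3}}$ satisfies the leading asymptotic (\ref{Wright}) but violates log-concavity at every large odd $n$, since the oscillating $n^{-1}$ term swamps the $n^{-4/3}$ deficit. So the ``bare asymptotic'' branch of your argument fails, and with it the claimed proof of the third assertion of Theorem \ref{asymptotic}.

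What the paper actually uses is your other branch, and it is needed even for a non-explicit $N$: Wright's more precise formula (his (2.21)) in the form $\func{pp}(n)=C_2 n^{-25/36}\mathrm{e}^{C_1 n^{2/3}}\bigl(1+\gamma_1 n^{-2/3}+\gamma_2 n^{-4/3}+O(n^{-2})\bigr)$. The point, carried out in the paper's general theorem, is that the corrections $\gamma_1 n^{-2/3}+\gamma_2 n^{-4/3}$ are smooth in $n$, so their contribution to the relevant ratio after second-differencing is only $O(n^{-8/3})$, the $O(n^{-2})$ error and the factor $\bigl((n^2-1)/n^2\bigr)^{-25/36}$ contribute $O(n^{-2})$, and all of these are $o(n^{-4/3})$, hence beaten by the guaranteed gain $\exp\bigl(2C_1n^{2/3}-C_1(n-1)^{2/3}-C_1(n+1)^{2/3}\bigr)\geq 1+\tfrac{C_1}{9}n^{-4/3}$. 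Note also that your proposed fallback expansion with error $O(n^{-4/3})$ is borderline: an error of the same order $n^{-4/3}$ as the main deficit is only acceptable if its constant is explicitly smaller than $C_1/9$, which is why one keeps the $\gamma_2 n^{-4/3}$ term and an $O(n^{-2})$ remainder. Finally, your hesitation about the ``all even $n$'' clause is reasonable — the paper's asymptotic argument likewise yields only an ineffective $N$, so that clause is not covered beyond the computed range by the proof given there either — but the decisive defect in your proposal is the reliance on the unquantified $1+o(1)$.
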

It would be very interesting to determine such a $N$ of reasonable size
and to finally prove the conjecture.
\subsection{Main Results: Polynomization}
It is possible to consider $\{\func{pp}(n)\}_n$ as
special values of a family of polynomials $\{P_n(x)\}_n$.
We will have $\func{pp}(n) = P_n(1)$.
This makes it possible to generalize the Bessenrodt--Ono
inequality and the log-concavity.
We view the inequalities as a property of the largest positive real zeros of new polynomials 
associated with $\{P_n(x)\}_n$.
\begin{definition}
Let $\sigma_2(n):= \sum_{d \mid n} d^2$. Let $P_0(x):=1$ and
\begin{equation}
P_n(x):= \frac{x}{n} \sum_{k=1}^n \sigma_2(k) \, P_{n-k}(x).
\label{xeasy}
\end{equation}
\end{definition}
We have listed the first polynomials in Table \ref{pnx}.
\begin{table}[H]
\[
\begin{array}{rl}
\hline
n&P_{n}\left( x\right) \\ \hline \hline
1&x
\\
2&\frac{1}{2}\*x^2
 + \frac{5}{2}\*x
\\
3&\frac{1}{6}\*x^3
 + \frac{5}{2}\*x^2
 + \frac{10}{3}\*x
\\
4&\frac{1}{24}\*x^4
 + \frac{5}{4}\*x^3
 + \frac{155}{24}\*x^2
 + \frac{21}{4}\*x
\\
5&\frac{1}{120}\*x^5
 + \frac{5}{12}\*x^4
 + \frac{115}{24}\*x^3
 + \frac{163}{12}\*x^2
 + \frac{26}{5}\*x
\\ \hline
\end{array}
\]
\caption{\label{pnx}Polynomials $P_{n}\left( x\right) $ for $n\in \left\{ 1,2,3,4,5\right\} $.}
\end{table}
Let $q,z \in \mathbb{C}$ and $\vert q \vert <1$. 
It is a standard procedure to show that
\begin{equation*}
\sum_{n=0}^{\infty} P_n(z) \, q^n =
\prod_{n=1}^{\infty} \left( 1 - q^n \right)^{- n \,z} =  \func{exp}\left( z \, \sum_{n=1}^{\infty} \sigma_2(n) \, \frac{q^n}{n} \right).
\end{equation*}
Thus, $n \, \func{pp}(n) = \sum_{k=1}^n \sigma_2(k) \, \func{pp}(n-k)$, applying MacMahon's discovery.
Let $k$ be a positive integer.
We would like to
call $P_n(k)$ the $k$-\emph{colored\/} plane partitions
(compare \cite{BBPT19}), but
at the moment there is no combinatorial interpretation available, as in the case of
partitions \cite{HNT20, BKRT21}.
The topic is quite
complicated, since MacMahon's result is already non-trivial and $\func{pp}(n)$
can also be identified with the number of all partitions of $n$, where each part $n_j$ is allowed to have $n_j$ colors.
\subsubsection{Bessenrodt--Ono Inequalities}
\begin{theorem}
\label{th:x}
Let $x \in \mathbb{R}$ and $x>5$. Then
\[
P_a(x) \, P_b(x) > P_{a+b}(x)
\]
for all positive integers $a$ and $b$.
\end{theorem}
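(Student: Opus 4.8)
The plan is to argue by induction on $n:=a+b$, using the recursion (\ref{xeasy}) in the form $n\,P_n(x)=x\sum_{k=1}^{n}\sigma_2(k)\,P_{n-k}(x)$ and the elementary fact that, because every coefficient of $P_n$ is non-negative, $P_n(x)>0$ for $x>0$ and $P_n$ is increasing on $(0,\infty)$. By symmetry we may assume $b\le a$.

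\emph{Case $b=1$.} Here the assertion is $x\,P_a(x)>P_{a+1}(x)$. Expanding $(a+1)P_{a+1}(x)$ by the recursion, isolating the $k=1$ term, shifting the index, and inserting the recursion for $a\,P_a(x)$, this becomes
\[
\sum_{k=1}^{a}\bigl(x\,\sigma_2(k)-\sigma_2(k+1)\bigr)\,P_{a-k}(x)>0 .
\]
Each $P_{a-k}(x)$ is non-negative, while $x\,\sigma_2(k)-\sigma_2(k+1)>0$ for every $k\ge 1$ as soon as $x>\max_{k\ge 1}\sigma_2(k+1)/\sigma_2(k)$. A short estimate (via $m^2\le\sigma_2(m)<\zeta(2)\,m^2$) shows this maximum equals $\sigma_2(2)/\sigma_2(1)=5$ and is attained only at $k=1$; this is the source of the constant $5$, and it is sharp, since $P_1(x)^2-P_2(x)=\tfrac12\,x(x-5)$.

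\emph{Case $b\ge 2$} (so $a\ge 2$). Split the recursion for $P_{a+b}$ at $k=a$:
\[
(a+b)\,P_{a+b}(x)=x\sum_{k=1}^{a}\sigma_2(k)\,P_{a+b-k}(x)+x\sum_{j=1}^{b}\sigma_2(a+j)\,P_{b-j}(x).
\]
For $1\le k\le a-1$ the pair $(a-k,b)$ has index sum $<n$ (and reduces to the case $b=1$ when $a-k=1$), so by the induction hypothesis $P_{a+b-k}(x)<P_{a-k}(x)P_b(x)$; for $k=a$ this is the identity $P_b(x)=P_0(x)P_b(x)$, and since $a\ge 2$ at least one of these is strict. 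Summing and applying the recursion for $a\,P_a(x)$ gives $x\sum_{k=1}^{a}\sigma_2(k)\,P_{a+b-k}(x)<a\,P_a(x)P_b(x)$, hence $(a+b)P_{a+b}(x)<a\,P_a(x)P_b(x)+x\sum_{j=1}^{b}\sigma_2(a+j)\,P_{b-j}(x)$. It therefore suffices to prove $x\sum_{j=1}^{b}\sigma_2(a+j)\,P_{b-j}(x)\le b\,P_a(x)P_b(x)$. Using $b\,P_b(x)=x\sum_{j=1}^{b}\sigma_2(j)\,P_{b-j}(x)$, this follows term by term from $\sigma_2(a+j)\le P_a(x)\,\sigma_2(j)$ for $1\le j\le b$; and since $m^2\le\sigma_2(m)<\zeta(2)\,m^2$, one checks that for $a\ge 2$ the ratio $\sigma_2(a+j)/\sigma_2(j)$ is maximal at $j=1$. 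Thus the inductive step reduces to the key lemma: $P_a(x)\ge\sigma_2(a+1)$ for all $x\ge 5$ and all $a\ge 2$.

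I expect this lemma to be the only genuinely technical point, the remainder being bookkeeping with the recursion. Since $P_a$ has non-negative coefficients, $P_a(x)\ge P_a(5)\ge P_a(1)=\func{pp}(a)$, so it is enough to verify $P_a(5)\ge\sigma_2(a+1)$ directly for the finitely many small $a$ for which $\func{pp}(a)$ is too small (these are very comfortable — already $P_6(5)$ runs into the thousands whereas $\sigma_2(7)=50$), and then to establish $\func{pp}(a)\ge\sigma_2(a+1)$ for all large $a$. This last point holds because $\func{pp}(a)$ grows super-polynomially: one may quote Wright's asymptotic (\ref{Wright}), or, more elementarily, iterate $a\,\func{pp}(a)=\sum_{k=1}^{a}\sigma_2(k)\func{pp}(a-k)$ twice to obtain a lower bound of order $a^{4}$, which eventually dominates $\zeta(2)(a+1)^2>\sigma_2(a+1)$. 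The remaining care is in pinning down the exact finite range requiring direct computation and making the growth estimate effective; note once more that the lemma is sharp at the corner $P_1(5)=\sigma_2(2)=5$, matching the optimality of the threshold $x>5$.
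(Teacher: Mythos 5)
Your argument is correct, and while it shares the paper's skeleton -- induction on $a+b$ and splitting the recursion (\ref{xeasy}) for $(a+b)P_{a+b}(x)$ at $k=a$, which is exactly the paper's decomposition $P_{a,b}=L+R$ from Section \ref{Strategy} -- the way you control the two pieces is genuinely different. The paper bounds the negative part crudely by $L>-4ab\,P_b(x)$, discards everything in $R$ except $R_1>\frac{b}{2a^2}P_{a-1}(x)P_b(x)$, and then wins by the growth of $P_{a-1}$ via (\ref{estimation}), which reduces matters to the positivity of a degree-$9$ polynomial in $a$ (valid for $a\geq 7$) plus a PARI/GP verification of the cases $a+b\leq 12$ recorded in Table~\ref{NS}. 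You instead treat $b=1$ exactly, observing that $x\,P_a(x)-P_{a+1}(x)$ is a positive combination of the $P_{a-k}(x)$ with coefficients $x\sigma_2(k)-\sigma_2(k+1)$, which both identifies the threshold $5=\sigma_2(2)/\sigma_2(1)$ and proves its sharpness ($P_1(x)^2-P_2(x)=\tfrac12 x(x-5)$); and for $b\geq 2$ you compare the tail $x\sum_{j=1}^{b}\sigma_2(a+j)P_{b-j}(x)$ term by term with $b\,P_a(x)P_b(x)$, reducing the whole induction step to the single lemma $P_a(x)\geq\sigma_2(a+1)$. What your route buys is a proof with no computer verification and no base-case table, together with a structural explanation of the constant $5$; what the paper's route buys is a template that transfers with only changed constants to the harder regimes $x\geq 2$ (Theorem \ref{th: bo2}) and $x=1$ (Theorem \ref{BO: plane}), where exceptions genuinely occur and your term-by-term comparison would fail for small $b$. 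The one point you should nail down is the key lemma: rather than invoking the growth of $\func{pp}(a)$ plus an unspecified finite check, note that the paper's own bound (\ref{estimation}) with $m=2$ gives $P_a(x)>5a+\frac{25}{2}\binom{a+1}{3}\geq 2(a+1)^2>\sigma_2(a+1)$ for all $a\geq 2$ and $x\geq 5$, which closes the lemma uniformly and keeps your proof entirely free of numerics.
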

It is also possible to get results for $x=1,2,3,4,5$.
This leads to restrictions on $a$ and $b$,
reflected in Table \ref{NS},
where we have recorded the largest real zero of
\begin{equation*}\label{pab}
P_{a,b}(x):= P_a(x) \, P_b(x) - P_{a+b}(x).
\end{equation*}
Thus, studying the polynomials $P_n(x)$ and $P_{a,b}(x)$ and
their leading coefficients and
zeros, provides the big picture and reveals information on the original task, studying properties
of plane partitions $\func{pp}(n)= P_n(1)$. Note
that $P_{a,b}(x)$ goes to infinity
for $a,b\geq 1$, as $x$ goes to infinity.
\begin{theorem} \label{th: bo2}
Let $x \in \mathbb{R}$ and $x\geq 2$. Then
\[
P_a(x) \, P_b(x) > P_{a+b}(x)
\]
for all positive integers $a$ and $b$ satisfying $a + b \geq 12$.
\end{theorem}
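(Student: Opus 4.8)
The plan is to use Theorem~\ref{th:x} to settle the range $x>5$, reducing the problem to the compact interval $2\le x\le 5$, and there to argue by induction on $N:=a+b$ using the recursion~(\ref{xeasy}).

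I begin with some bookkeeping. From~(\ref{xeasy}) every $P_n$ with $n\ge 1$ has strictly positive coefficients, so each $P_n$ is positive and strictly increasing on $[0,\infty)$; this is used throughout. By symmetry we may take $1\le b\le a$. The case $b=1$ is the statement $x\,P_a(x)>P_{a+1}(x)$ for $a\ge 11$, which is essentially sharp near $x=2$ for small $a$; I would verify it by a direct polynomial positivity check on $[2,5]$ for $a$ up to an explicit bound and then deduce it for large $a$ from the growth estimate mentioned below. Henceforth $2\le b\le a$.

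For the induction, the base cases are the finitely many pairs with $12\le N\le N_0$, where $N_0$ is an explicit threshold. For each such $(a,b)$ one certifies $P_{a,b}(x)>0$ on $[2,5]$ by a Sturm-sequence computation, equivalently by checking that the largest real zero of $P_{a,b}$ --- as recorded in Table~\ref{NS} --- lies below $2$. For the inductive step, let $N>N_0$ and expand the larger factor and the right-hand side via~(\ref{xeasy}):
\[
a\,P_a(x)\,P_b(x)=x\sum_{k=1}^{a}\sigma_2(k)\,P_{a-k}(x)\,P_b(x),\qquad N\,P_{a+b}(x)=x\sum_{k=1}^{N}\sigma_2(k)\,P_{N-k}(x).
\]
For each index $k$ with $a-k\ge 2$ and $(a-k)+b\ge 12$ the inductive hypothesis gives $P_{a-k}(x)\,P_b(x)>P_{N-k}(x)$; combined with $\tfrac1a>\tfrac1N$, every such ``matched'' contribution already exceeds the corresponding term of $N\,P_{a+b}(x)$, and with a definite surplus coming from the large indices $P_{N-1}(x),P_{N-2}(x),\dots$. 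It then remains to absorb the finitely many ``boundary'' terms (indices $k$ near $a$) and ``tail'' terms ($a<k\le N$), to which the inductive hypothesis does not directly apply, into that surplus.

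The main obstacle is precisely this last step. It reduces to controlling, on $[2,5]$, the consecutive ratios $P_{n+1}(x)/P_n(x)$ for small $n$ together with an effective lower bound for $P_{N-1}(x)$ against the lower-index polynomials; the former is a finite computation, and the latter follows for all large $N$ from a Wright-type analysis of $\prod_{n\ge 1}(1-q^n)^{-nx}$ once it is made fully explicit. For moderate $N$ these estimates are too crude, which is exactly why $N_0$ cannot be taken as small as $12$ and the initial segment must be handled computationally; determining the cleanest estimate, hence the smallest workable $N_0$, is the technical heart of the proof. Finally, the few narrow columns $b\in\{2,3,4,5\}$ with $N$ just above $12$, which the generic step does not capture cleanly, are either folded into the base cases or dispatched by specializing the above expansion to these small values of $b$.
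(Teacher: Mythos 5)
Your setup is essentially the paper's: reduce by symmetry to $b\le a$, induct on $a+b$ using the recursion (\ref{xeasy}), compare $\frac{1}{a}\sum_{k}\sigma_2(k)P_{a-k}(x)P_b(x)$ with $\frac{1}{a+b}\sum_{k}\sigma_2(k)P_{a+b-k}(x)$ term by term where the induction hypothesis applies, and certify an initial segment computationally through the largest real zeros (Table~\ref{NS}). But the argument is not complete: exactly the step you yourself call ``the technical heart'' --- absorbing the tail indices $a<k\le a+b$ and the boundary indices $k$ near $a$ for which $(a-k)+b<12$, where the inequality $P_{a-k}(x)P_b(x)>P_{a+b-k}(x)$ can genuinely fail --- is left unproved, and with it the choice of $N_0$ and the whole inductive step. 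In the paper this is precisely the content of the decomposition $P_{a,b}=L+R_1+R_2+R_3$: one shows $L>-4\,a\,b\,P_b(x)$ (via $\sigma_2(n)<2n^2$ and (\ref{HNT Theorem})), $R_1>\frac{b}{2a^2}P_{a-1}(x)P_b(x)$, and $R_{31}$ is bounded below using (\ref{eq:minimal}) together with the finite worst-case sum $-641$ of Table~\ref{minimal} and the monotonicity of the $P_{k,1}(x)$ for $x\ge 2$. Without explicit bounds of this kind there is no quantitative surplus to absorb anything into.

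The tool you invoke to close the gap is also problematic. You propose to obtain an effective lower bound for $P_{N-1}(x)$, uniform on $2\le x\le 5$, from ``a Wright-type analysis of $\prod_{n\ge1}(1-q^n)^{-nx}$ once it is made fully explicit.'' No such explicit analysis exists: the generating function is not weakly modular, so there are no Rademacher--Lehmer type formulas with guaranteed error terms --- the paper emphasizes this, and it is the very reason its log-concavity statement for large $n$ (Theorem~\ref{asymptotic}) is only ineffective. Making Wright's saddle-point asymptotics explicit and uniform in $x$ would be a substantial project in its own right, and it is unnecessary: the elementary bound (\ref{estimation}), $P_n(x)>\sum_{\ell=1}^{m}\frac{1}{\ell!}\binom{n+\ell-1}{2\ell-1}x^{\ell}$, used with $x\ge 2$ and $m=8$, is what the paper employs to beat the negative contributions, yielding positivity for $a\ge 27$ and reducing everything else to a finite PARI/GP verification for $12\le a+b\le 52$. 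Note also that your preliminary reduction to the compact interval $[2,5]$ via Theorem~\ref{th:x} is superfluous once such polynomial lower bounds are in hand, since $P_{a,b}(x)$ has positive leading coefficient and only its largest real zero matters.
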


It would be interesting to search for a combinatorial proof
for the plane partition numbers (Theorem~\ref{BO: plane}) and their
generalization to $k$-colored plane partitions
(Theorems~\ref{th:x} and~\ref{th: bo2}).
\begin{table}[H]
\[
\begin{array}{rcccccccccccc}
\hline
a\backslash b&1&2&3&4&5&6&7&8&9&10&11&12\\ \hline \hline
1&5.0&3.2&3.0&2.6&2.5&2.3&2.2&2.1&2.1&2.0&2.0&1.9\\
2&3.2&1.9&1.6&1.4&1.3&1.2&1.1&1.1&1.0&1.0&1.0&0.9\\
3&3.0&1.6&1.5&1.2&1.1&1.0&1.0&0.9&0.9&0.8&0.8&0.8\\
4&2.6&1.4&1.2&0.9&0.9&0.8&0.7&0.7&0.6&0.6&0.6&0.6\\
5&2.5&1.3&1.1&0.9&0.9&0.7&0.7&0.6&0.6&0.6&0.6&0.5\\
6&2.3&1.2&1.0&0.8&0.7&0.6&0.6&0.5&0.5&0.5&0.5&0.4\\
7&2.2&1.1&1.0&0.7&0.7&0.6&0.6&0.5&0.5&0.5&0.5&0.4\\
8&2.1&1.1&0.9&0.7&0.6&0.5&0.5&0.4&0.4&0.4&0.4&0.4\\
9&2.1&1.0&0.9&0.6&0.6&0.5&0.5&0.4&0.4&0.4&0.4&0.3\\
10&2.0&1.0&0.8&0.6&0.6&0.5&0.5&0.4&0.4&0.4&0.3&0.3\\
11&2.0&1.0&0.8&0.6&0.6&0.5&0.5&0.4&0.4&0.3&0.3&0.3\\
12&1.9&0.9&0.8&0.6&0.5&0.4&0.4&0.4&0.3&0.3&0.3&0.3\\ \hline
\end{array}
\]
\caption{\label{NS}
Approximative largest real zeros of $
P_{a,b}\left( x\right) $ for $1\leq a,b\leq 12
$.}
\end{table}
\subsubsection{Log-Concavity}
In the spirit of the Chern--Fu--Tang
Conjecture \cite{CFT18} on $k$-colored partitions 
and their
polynomization \cite{HN21A} (see also \cite{BKRT21}), we consider 
the polynomials
\[
\Delta_{a,b}(x):= P_{a-1}(x)P_{b+1} (x) - P_{a}(x) \, P_b(x).
\]
Note that $\Delta_{a+1,a-1}(x)= P_a(x)^2 - P_{a-1}(x) \, P_{a+1}(x)$.
Then $\Delta_{a+1, a-1}(1)>0$
is the log-concavity condition for $\func{pp}(a)$.

\begin{minipage}{0.45\textwidth}
\includegraphics[width=\textwidth]{./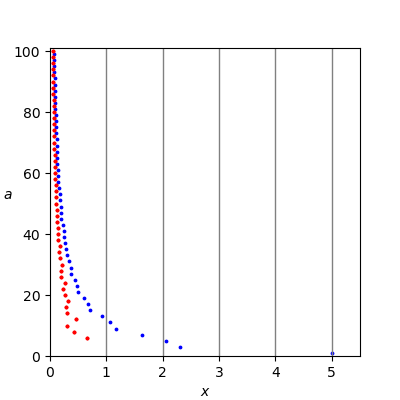}
\end{minipage}
\hfill
\begin{minipage}{0.52\textwidth}
\vspace{1.5cm}
\captionsetup{width=\linewidth}
\captionof{figure}{\label{ThePlot}
Zeros of $\Delta_{a+1, a-1}(x)$ with positive real part
for $ 1 \leq a \leq 100$.
Blue labels the real zeros and red the imaginary zeros.}
\end{minipage}
This leads to the following conjecture on the log-concavity of the polynomials $P_n(x)$.
For orthogonal polynomials, such kinds of inequalities
are called Tur\'{a}n inequality.
\begin{conjecture}[Tur\'{a}n inequality]\label{C1}
Let $a$ be an integer with $a \geq 12$ and let $x$ be a real number with $x\geq 1$.
Then 
\[
P_a(x)^2 > P_{a-1}(x) \, P_{a+1}(x).
\]
\end{conjecture}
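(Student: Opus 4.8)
The plan is to reduce the Tur\'an inequality for the polynomials $P_n(x)$ on the ray $x\ge 1$ to the two results already available in the excerpt together with a finite check, exactly as the log-concavity Theorem~\ref{asymptotic} is obtained for $\mathrm{pp}(n)=P_n(1)$. First I would record the basic analytic structure of $P_n(x)$: from \eqref{xeasy} each $P_n$ is a polynomial of degree $n$ with \emph{positive} coefficients (an immediate induction, since $\sigma_2(k)>0$), with leading term $x^n/n!$ and $P_n(0)=0$ for $n\ge 1$. Hence $P_a(x)^2-P_{a-1}(x)P_{a+1}(x)=\Delta_{a+1,a-1}(x)$ is a polynomial whose leading coefficient is $(1/a!)^2-1/\big((a-1)!(a+1)!\big)=\frac{1}{(a!)^2}\big(1-\tfrac{a}{a+1}\big)=\frac{1}{(a+1)\,(a!)^2}>0$, so $\Delta_{a+1,a-1}(x)\to+\infty$ as $x\to\infty$ for every $a$. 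This means the Tur\'an inequality on $[1,\infty)$ is equivalent to the statement that the largest \emph{real} zero of $\Delta_{a+1,a-1}$ is $<1$, which is the content visualized in Figure~\ref{ThePlot}.

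The main device is monotonicity in $x$ reducing "$x\ge 1$'' to the endpoint "$x=1$'', plus a growth estimate in $a$. Concretely, I would prove the two-sided bracketing
\begin{equation*}
P_{a-1}(x)\,P_{a+1}(x) \;<\; P_a(x)^2 \;<\; P_a(x)\,P_{a+1}(x) \quad\text{for } a\ge 12,\ x\ge 1,
\end{equation*}
where the right inequality is just $P_a(x)<P_{a+1}(x)$ (true for $x\ge 1$ since $P_{a+1}-P_a$ has positive coefficients and vanishing constant term, from $(a+1)P_{a+1}(x)-(a+1)P_a(x)=x\sum_k\sigma_2(k)P_{a+1-k}(x)-\dots$, a short induction), and the left inequality is the Tur\'an inequality itself. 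For the left inequality I would combine Theorem~\ref{th: bo2} (which gives $P_{a-1}(x)P_{b+1}(x)>P_{a+b}(x)$ for $x\ge 2$ once $a+b\ge 12$) with a direct comparison of $P_a(x)^2$ against $P_{a+1}(x)$: writing $P_a(x)^2 / P_{a+1}(x)$ and using the recursion to control the ratio $P_{a+1}(x)/P_a(x)$ from above by something like $1+\frac{x\,\sigma_2(a+1)}{(a+1)P_a(x)}\le 1+\varepsilon_a$ with $\varepsilon_a\to 0$, while $P_a(x)/P_{a-1}(x)\ge 1$, so that $P_a(x)^2\ge P_{a-1}(x)P_a(x)> P_{a-1}(x)P_{a+1}(x)$ provided $\varepsilon_a$ is small enough — which holds for $a$ beyond an explicit bound. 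For $x\ge 1$ but $x<2$, where Theorem~\ref{th: bo2} is not directly available, I would instead use the endpoint reduction: show $x\mapsto \Delta_{a+1,a-1}(x)/x^{2}$ or an analogous normalization is increasing on $[1,\infty)$ (coefficientwise positivity of the relevant polynomial combination, checked via the recursion), so that positivity at $x=1$, i.e.\ $\mathrm{pp}(a)^2>\mathrm{pp}(a-1)\,\mathrm{pp}(a+1)$, which is Theorem~\ref{asymptotic} for $12\le a\le 10^5$, propagates to all $x\ge 1$ for those $a$; the tail $a>10^5$ (or $a>N$) is handled by the $a$-asymptotic argument just sketched, now uniformly in $x\ge 1$ using the positive-coefficient lower bounds $P_j(x)\ge x^j/j!$.

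The remaining work is then a finite verification for $12\le a\le N$ and $x\ge 1$, which by the monotonicity step collapses to $x=1$, i.e.\ exactly the finite range already certified in Theorem~\ref{asymptotic}; combined with the uniform-in-$x$ tail estimate this yields Conjecture~\ref{C1} for all $a\ge 12$. The hard part will be the tail estimate's uniformity in $x$: for fixed $x=1$ one can lean on Wright's asymptotic \eqref{Wright}, but that is unavailable as a function of $x$, so one must instead extract an honest inequality $P_{a-1}(x)P_{a+1}(x)<P_a(x)^2$ for all $x\ge 1$ and all large $a$ purely from the recursion \eqref{xeasy} — essentially proving a log-concavity statement for the generating function $\prod(1-q^n)^{-nz}$ with an error term controlled independently of $z\ge 1$. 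If the coefficientwise-monotonicity claim for $\Delta_{a+1,a-1}(x)/x^2$ fails to be literally true (the plots suggest stray complex zeros with positive real part, so one cannot expect $\Delta$ itself to have all nonnegative coefficients), one would fall back to a Newton-inequality / real-rootedness analysis of $P_n(x)$ — but establishing real-rootedness of the $P_n$ is itself likely the genuine obstacle, and I would expect the cleaner route to be the explicit ratio bound on $P_{a+1}/P_a$ described above, made uniform in $x\ge 1$.
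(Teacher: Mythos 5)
You should first be clear that the statement you set out to prove is Conjecture~\ref{C1} of the paper: the authors do \emph{not} prove it. What the paper establishes is only the specialization $x=1$ in restricted ranges (Theorem~\ref{asymptotic}: $12\le n\le 10^{5}$ by computation, plus an ineffective ``for all $n>N$'' via Wright's asymptotic (\ref{Wright})), together with numerical evidence for the polynomial statement (largest real zeros of $\Delta_{a+1,a-1}$ for $a\le 100$, and nonnegativity of its coefficients for even $a\le 1000$). So there is no paper proof to compare with, and the question is only whether your sketch closes the gap. It does not.

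The concrete failures are these. (i) You misquote Theorem~\ref{th: bo2}: it gives the Bessenrodt--Ono inequality $P_a(x)P_b(x)>P_{a+b}(x)$ for $x\ge 2$, not the Chern--Fu--Tang--type inequality $P_{a-1}(x)P_{b+1}(x)>P_a(x)P_b(x)$; the latter is Conjecture~\ref{C2}, equally open, and a BO-type inequality does not imply a Tur\'an inequality. (ii) Your ratio bound $P_{a+1}(x)/P_a(x)\le 1+\frac{x\,\sigma_2(a+1)}{(a+1)P_a(x)}$ is false: already at $x=1$ it would force $\func{pp}(a+1)-\func{pp}(a)\le\sigma_2(a+1)/(a+1)<2(a+1)$, while the left side grows superpolynomially. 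Moreover, even granted some bound $P_{a+1}\le(1+\varepsilon_a)P_a$, your chain $P_a^2\ge P_{a-1}P_a>P_{a-1}P_{a+1}$ needs $P_a>P_{a+1}$, which contradicts (\ref{HNT Theorem}); what is actually required is the ratio monotonicity $P_{a+1}/P_a<P_a/P_{a-1}$, i.e.\ the conjecture itself, and since both ratios are $1+O(a^{-1/3})$ (at $x=1$, by Wright), crude first-order bounds of the kind you propose cannot separate them --- one needs a second-order expansion with controlled error, uniform in $x\ge1$, which is exactly the ingredient you concede is missing (Wright's formula is available only at $x=1$). (iii) Your reduction of $x\ge1$ to $x=1$ via coefficientwise monotonicity of $\Delta_{a+1,a-1}(x)/x^2$ fails for odd $a$: Table~\ref{examples} exhibits negative coefficients already for $a=3,5$, and the paper states this happens for odd $a$ generally; and the $x=1$ endpoint is itself only known for $a\le 10^{5}$ and for ineffectively large $a$. (iv) The fallback to real-rootedness/Newton inequalities of $P_n(x)$ is not relevant even if it held: log-concavity here is a statement about the sequence $n\mapsto P_n(x)$ at a fixed $x$, not about the coefficient sequence of a single polynomial. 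In short, the proposal is a reasonable programme, but each of its load-bearing steps is either false as stated or begs the quantitative estimate that makes the statement a conjecture rather than a theorem.
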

This is a natural extension of Conjecture \ref{planeconjecture} on the log-concavity of
plane partitions.

It would also be interesting to study the hyperbolicity of the associated Jensen polynomials \cite{GORZ19}
and higher Tur\'{a}n inequalities \cite{CJW19}.
Let $\alpha(m):=\func{pp}(m)$ or more general, $\alpha(m):=P_m(x)$, $x \in \mathbb{R}_{>0}$.
The Jensen polynomial of degree $d$ and shift $n$ attached to the sequence $\{\alpha(0), \alpha(1), \alpha(2), \ldots\}$ of non-negative real numbers is the polynomial
\begin{equation*}
J_{\alpha}^{d,n}(X) := \sum_{k=0}^{d} \binom{d}{k} \, \alpha(n+k) \, X^k.
\end{equation*}
It follows from the zero distribution of the polynomial $\Delta_{a+1,a-1}(x):= P_a(x)^2 - P_{a-1}(x) \, P_{a+1}(x)$
(see Figure \ref{ThePlot}) that Conjecture \ref{C1} is true for $12 \leq a \leq 100$, since the polynomial goes to infinity,
as $x$ goes to infinity.
Let $a$ be even and $2 \leq a \leq
1000$, then the Tur\'{a}n inequality in Conjecture \ref{C1} is already
valid for $x >0$. Actually, in this case all coefficients of $\Delta_{a+1,a-1}(x)$ are non-negative.

The generalization of the former Chern--Fu--Tang conjecture
of $k$-colored partitions
and its polynomization \cite{HN21A} leads to:
\begin{conjecture}\label{C2}
Let $a$ and $b$ be integers. Suppose $ a-1 > b \geq 0$ and $(a,b) \not \in \{(4,2), (6,4)\}$. 
Then for all real numbers $x \geq 2$:
\[
\Delta_{a,b}(x):= P_{a-1}(x) \, P_{b+1}(x) - P_a(x) \, P_b(x) > 0.
\]
\end{conjecture}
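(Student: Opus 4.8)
The plan is to reduce Conjecture~\ref{C2} to the Tur\'{a}n inequality of Conjecture~\ref{C1} together with a finite list of one-variable polynomial inequalities, and then to indicate how the latter might be proved. Since every coefficient of $P_n(x)$ is non-negative and $n\,P_n(x)=x\sum_{k=1}^{n}\sigma_2(k)\,P_{n-k}(x)$, an immediate induction gives $P_n(x)>0$ for $x>0$. Writing $r_n(x):=P_n(x)/P_{n-1}(x)$ and dividing by $P_{a-1}(x)P_b(x)>0$ yields
\[
\Delta_{a,b}(x)=P_{a-1}(x)\,P_b(x)\,\bigl(r_{b+1}(x)-r_a(x)\bigr),
\]
so for $x>0$ the inequality $\Delta_{a,b}(x)>0$ is equivalent to $r_{b+1}(x)>r_a(x)$; the hypothesis $a-1>b$ means exactly $a\ge b+2>b+1$, and the relation $r_n(x)>r_{n+1}(x)$ is precisely the log-concavity $P_n(x)^2>P_{n-1}(x)P_{n+1}(x)$ of Conjecture~\ref{C1}.

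I would then split on the size of $b$. If $b\ge 11$, every index occurring is $\ge 12$, so Conjecture~\ref{C1} gives the chain $r_{b+1}(x)>r_{b+2}(x)>\dots>r_a(x)$ and hence $\Delta_{a,b}(x)>0$, with no exceptions. If $b\le 10$, then for $a\ge 13$ Conjecture~\ref{C1} gives $r_a(x)<r_{12}(x)$, so once $r_{b+1}(x)>r_{12}(x)$ — that is, $\Delta_{12,b}(x)>0$ — one gets $r_{b+1}(x)>r_a(x)$ for all $a\ge 12$; thus for each fixed $b\le 10$ the claim is reduced to the finitely many pairs with $b+2\le a\le 12$, each of which asks for the positivity on $[2,\infty)$ of a concrete polynomial in the single variable $x$, a decidable question (e.g.\ by a Sturm-sequence count). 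Carrying out this finite verification is exactly what locates the exceptional behaviour: for $(a,b)=(4,2)$ the required inequality is $P_3(x)^2>P_2(x)P_4(x)$, which fails at $x=2$ (there $\Delta_{4,2}(2)=-5$) — reflecting that $\{P_n(x)\}_n$ is (conjecturally) log-concave only from about $n=12$ on — and likewise for $(6,4)$.

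Modulo Conjecture~\ref{C1} and this bounded computation the result follows; conversely, taking $b=a-2$ in Conjecture~\ref{C2} returns the Tur\'{a}n inequality, so the two are essentially equivalent and the substance lies in Conjecture~\ref{C1}, which the paper verifies only for $12\le a\le 100$. To prove it in general I would follow the route used for $p(n)$ by DeSalvo--Pak~\cite{DP15} and, for Jensen polynomials, by Griffin--Ono--Rolen--Zagier~\cite{GORZ19}: first, upgrade Wright's asymptotic~(\ref{Wright}), and its $x$-analogue for $\prod_{n\ge 1}(1-q^n)^{-nx}$, to a genuine asymptotic \emph{expansion} $P_n(x)=n^{\alpha(x)}\mathrm{e}^{\beta(x)n^{2/3}}\bigl(C_0(x)+C_1(x)n^{-1/3}+\cdots\bigr)$ with an explicit, effective error term, by carrying out Wright's saddle-point/circle-method argument with controlled constants; second, deduce from the shape of this expansion that $\log P_n(x)$ is eventually concave in $n$, hence $P_n(x)^2>P_{n-1}(x)P_{n+1}(x)$ for all $n\ge N_0(x)$; third, make $N_0(x)$ uniform in $x\ge 2$ (re-deriving the case $x=1$ along the way) and close the remaining range $12\le n\le N_0$ by computation.

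The main obstacle is the first of these steps. Unlike the partition function, $\prod_{n\ge 1}(1-q^n)^{-nx}$ is not (quasi-)modular — $\sum_{n\ge1}\sigma_2(n)q^n$ would be a form of odd weight — so there is no Rademacher-type exact formula to lean on, and Wright's analysis has to be made fully effective with implied constants small enough that the leftover interval $12\le n\le N_0$ is within computational reach; doing this uniformly in the parameter $x\ge 2$ is a further complication. A self-contained alternative would be an induction on $a$ driven directly by $n\,P_n(x)=x\sum_{k}\sigma_2(k)\,P_{n-k}(x)$, bounding the ratios $P_{n-k}(x)/P_{n-1}(x)$ in the spirit of~\cite{HNT20,HN21B}; the difficulty there is that the weights $\sigma_2(k)$ grow like $k^2$ rather than staying bounded, so the crude ratio estimates that the recursion supplies are not sharp enough near the log-concavity threshold.
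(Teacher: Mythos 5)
The statement you set out to prove is, in the paper, only a conjecture: the authors give no proof, just evidence (the approximate largest real zeros of $\Delta_{a,b}$ for $a\le 20$ in Table~\ref{REALNS}) together with two unconditional special cases, namely $b=0$ for $x\ge 2$ (a corollary of Theorem~\ref{th: bo2} combined with Table~\ref{REALNS}) and $b=1$ for $x\ge 1$, the latter proved by induction on $a$ using the derivative formula (\ref{derivative}) and Lemma~\ref{Stufe}. Your reduction step itself is correct and clean: since $P_n(x)>0$ for $x>0$, setting $r_n=P_n/P_{n-1}$ gives $\Delta_{a,b}=P_{a-1}P_b\left( r_{b+1}-r_a\right)$, so that, granting Conjecture~\ref{C1}, the telescoping argument plus the finitely many single-variable checks with $b\le 10$ and $b+2\le a\le 12$ on $[2,\infty)$ would indeed yield Conjecture~\ref{C2}; your value $\Delta_{4,2}(2)=-5$ is correct and does explain the excluded pair $(4,2)$, just as the zero near $2.05$ explains $(6,4)$.

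The genuine gap is that this is conditional on Conjecture~\ref{C1}, which is itself open: the paper verifies it only for $12\le a\le 100$ by computing zeros, and even at $x=1$ Theorem~\ref{asymptotic} gives log-concavity only for $n$ beyond some unspecified $N$, via Wright's formula. The step of your plan that would carry the real weight --- an effective asymptotic expansion of $P_n(x)$, uniform in $x\ge 2$, sharp enough to force $r_n(x)$ to decrease for $n\ge 12$ --- is only a programme, and you yourself name the obstruction (no modularity, hence no Rademacher-type formula; effectivity and $x$-uniformity of Wright's saddle-point analysis) without overcoming it; the fallback induction on the recurrence is also acknowledged to be too lossy because the weights $\sigma_2(k)\sim k^2$ grow. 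So what you have is a correct reduction of Conjecture~\ref{C2} (for $x\ge 2$) to Conjecture~\ref{C1} plus a finite computation --- a worthwhile observation, and one that correctly locates the exceptional pairs --- but not a proof; the conjecture remains as open as in the paper. If you want unconditional content in the meantime, the paper's partial results suggest the more elementary route: deduce $b=0$ from the Bessenrodt--Ono Theorem~\ref{th: bo2}, and handle small $b$ by induction via (\ref{derivative}) and bounds such as Lemma~\ref{Stufe}, as is done there for $b=1$.
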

We refer to Table \ref{REALNS}, where we recorded the largest real zero for $\Delta_{a,b}$ for
all pairs $(a,b)$ with $a-1 > b \geq 0$ and $2 \leq a \leq 19$,
which shows that the
Conjecture \ref{C2} is valid
for 
all admissible pairs $(a,b)$ in this range.
\section{Basic Properties of $\sigma_2(n)$ and $P_n(x)$}

For
proof of Theorem \ref{BO: plane} and Theorem \ref{th:x} we
need some
elementary properties of $\sigma_2(n)$ and $P_n(x)$.
As a further evidence that Conjecture \ref{C1} most likely
to be true for even arguments $n$, we
prove that $\sigma_2(n)/n$ is log-concave for $n$ even.
Note that this is false for $\frac{1}{n}\sum_{d \vert n} d$, where $n$ is even (which is related to
the Chern--Fu--Tang conjecture for $k$-colored partitions).
\begin{proposition}\label{lc:quotient}
Let $n$ be an even, positive integer. Then
\[
\left( \frac{\sigma_2(n)}{n} \right) ^{2}>  \frac{\sigma_2(n-1)}{n-1}  \,\, \frac{\sigma_2(n+1)}{n+1}.
\]
\end{proposition}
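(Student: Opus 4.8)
The plan is to reduce the inequality to an explicit polynomial inequality in $n$ and then verify it directly. First I would bound $\sigma_2(n-1)$ and $\sigma_2(n+1)$ from above and $\sigma_2(n)$ from below. Since $n$ is even, $n$ has the divisor $n/2$, so $\sigma_2(n) \geq n^2 + (n/2)^2 + 1 = \tfrac{5}{4}n^2 + 1$; in particular $\sigma_2(n)/n \geq \tfrac{5}{4}n + 1/n$. For the odd neighbours I would use the crude but clean bound $\sigma_2(m) < \zeta(2)\, m^2 = \tfrac{\pi^2}{6} m^2$, valid for every $m \geq 1$, or, if a more elementary constant is preferred, $\sigma_2(m) = m^2 \sum_{d \mid m} d^{-2} < m^2 \sum_{d=1}^{\infty} d^{-2} = \tfrac{\pi^2}{6} m^2 < 1.645\, m^2$. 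Hence $\sigma_2(m)/m < 1.645\, m$ for $m \in \{n-1, n+1\}$.

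With these bounds the claim would follow from
\begin{equation*}
\left(\tfrac{5}{4} n\right)^{2} \;\geq\; (1.645)^2 \,(n-1)(n+1) \;=\; (1.645)^2 (n^2 - 1),
\end{equation*}
since $\bigl(\tfrac{5}{4}n + \tfrac1n\bigr)^2 > \bigl(\tfrac54 n\bigr)^2$ and $\tfrac{\sigma_2(n-1)}{n-1}\cdot\tfrac{\sigma_2(n+1)}{n+1} < (1.645)^2(n^2-1)$. But $(5/4)^2 = 1.5625 < 2.706 \approx (1.645)^2$, so this direct comparison is too weak: the $\zeta(2)$ bound on the odd neighbours loses too much. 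The fix, and the main technical point, is to note that for $m$ odd the divisor $m/2$ is unavailable, so one should instead write $\sigma_2(m)/m^2 = \sum_{d \mid m} d^{-2} \leq \prod_{p \mid m}(1-p^{-2})^{-1}$, and separate the smallest odd case: the worst odd $m$ are those divisible by $3$, giving $\sigma_2(m)/m^2 \leq \tfrac{9}{8}\cdot\tfrac{\pi^2/6}{\zeta(2)\text{-tail}}$; a clean sufficient bound is $\sigma_2(m) \leq \tfrac{9}{8}\cdot\tfrac{25}{24}\, m^2 = \tfrac{75}{64} m^2$ for odd $m$ with at most the prime factors $3,5$ dominating, and in general $\sigma_2(m)/m^2 < \tfrac{9}{8}\cdot\tfrac{1}{1-5^{-2}} \cdots$. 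Rather than chase the Euler product, the cleanest route is: for odd $m$, $\sigma_2(m) \le m^2 + (m/3)^2 + (m/5)^2 + \cdots$ only when those are integers, so one gets the uniform bound $\sigma_2(m) < \tfrac{9}{8} m^2$ for all odd $m$ \emph{not} divisible by $3$, and for $m$ divisible by $3$ one still has $\sigma_2(m) < \tfrac{\pi^2}{6} m^2$ but then $m - 1$ and $m+1$ are \emph{not} divisible by $3$; since $n-1$ and $n+1$ cannot both be divisible by $3$, at least one of them satisfies the sharper $\tfrac98$-bound.

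So the endgame is a small case analysis on $n \bmod 3$: in every case the product $\tfrac{\sigma_2(n-1)}{n-1}\tfrac{\sigma_2(n+1)}{n+1}$ is bounded by $\tfrac98 \cdot \tfrac{\pi^2}{6}(n^2-1) < \tfrac98 \cdot \tfrac53 (n^2-1) = \tfrac{15}{8}(n^2-1)$, and since $(5/4)^2 = \tfrac{25}{16} > \tfrac{15}{8}\cdot\tfrac{n^2-1}{n^2}$ holds precisely when $n^2 \ge \tfrac{30}{5} = 6$, i.e. for all even $n \geq 3$ (hence all even $n \geq 2$ except possibly $n=2$, which is checked by hand: $\sigma_2(2)/2 = 5/2$, $\sigma_2(1)\sigma_2(3)/3 = 1\cdot 10/3$, and $25/4 > 10/3$), the inequality follows. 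The main obstacle is getting the constant on the odd neighbours below $(5/4)^2 = 25/16$; the parity observation that $3 \nmid (n-1)$ or $3 \nmid (n+1)$ is what makes this possible, and handling the finitely many small $n$ by explicit computation closes the argument.
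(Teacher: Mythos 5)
There is a genuine gap: your final numerical comparison is reversed. You need the constant multiplying $(n^2-1)$ on the right to be \emph{below} $(5/4)^2=\tfrac{25}{16}$, as you yourself state, but your case analysis only yields $\tfrac{9}{8}\cdot\tfrac{\pi^2}{6}<\tfrac{15}{8}=1.875>\tfrac{25}{16}=1.5625$. The inequality $\tfrac{25}{16}>\tfrac{15}{8}\,\tfrac{n^2-1}{n^2}$ is equivalent to $\tfrac{5}{16}n^{2}<\tfrac{15}{8}$, i.e.\ $n^{2}<6$, so it holds only for $n=2$, not for $n\geq 3$ as you claim; for all larger even $n$ your upper bound on the product exceeds your lower bound $\bigl(\tfrac54 n\bigr)^2$ on the left-hand side, and the argument collapses. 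The structural reason is that for the neighbour divisible by $3$ you fall back on the full $\zeta(2)$ bound $\sigma_2(m)<\tfrac{\pi^2}{6}m^2$, which is too lossy no matter how sharp the bound on the other neighbour is.

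The missing idea (and what the paper's proof uses) is that every divisor of an odd number is odd, so for \emph{all} odd $m$ one can delete the even terms of the zeta sum:
\[
\sigma_2(m)=m^{2}\sum_{t\mid m}t^{-2}\leq m^{2}\Bigl(1-\tfrac14\Bigr)\sum_{t\geq 1}t^{-2}=\tfrac{\pi^{2}}{8}\,m^{2}<\tfrac54\,m^{2}.
\]
This removes the factor for the prime $2$ (a $3/4$ reduction), which is exactly the saving you tried to obtain via the prime $3$; with it, both odd neighbours satisfy the $\tfrac54$-bound, the product is $<\tfrac{25}{16}(n^2-1)$, no case analysis modulo $3$ is needed, and the even-side lower bound $\sigma_2(n)\geq n^{2}+(n/2)^{2}$ (indeed $\geq \tfrac54(n^{2}+4)$ for even $n\geq 6$, using the divisors $1,2,n/2,n$) gives $\bigl(\sigma_2(n)/n\bigr)^{2}\geq \tfrac{25}{16}\,n^{2}>\tfrac{25}{16}(n^{2}-1)$, closing the argument after checking the small even cases directly. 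Your opening lower bound for even $n$ and your preference for reducing to a polynomial inequality are in the same spirit as the paper, but as written the proof does not go through.
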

\begin{proof}
For $n$ odd we estimate
$$\sigma _{2}\left( n\right) =\sum _{t\mid n}t^{2}=
\sum _{t\mid n}\left( \frac{n
}{t
}\right) ^{2}=n^{2}\sum _{t\mid n}t^{-2}\leq n^{2}\left( 1-\frac{1}{4}\right) \sum _{t}t^{-2}\leq
\frac{\pi ^{2}}{8}n^{2}<\frac{5}{4}n^{2}.$$
Now, let $n\geq 6$ be even. Then
$\sigma _{2}\left( n\right) \geq n^{2}+\left( \frac{n}{2}\right) ^{2}+4
+1=\frac{5}{4}\left(
n^{2}
+4
\right) $.
Therefore,
$$\frac{\sigma _{2}\left( n-1\right) \sigma _{2}\left( n+1\right) }{n^{2}-1}<\frac{25}{16}
\left( n^{2}-1\right) <\left( \frac{5
}{4
}\frac{n^{2}+4}{n}\right) ^{2}\leq \left( \frac{\sigma _{2}\left( n\right) }{n}\right) ^{2}.$$
The inequality holds also for even $n\leq 4$
(Table~\ref{sigma2}).
\end{proof}
\begin{table}[H]
\[
\begin{array}{cccccc}
\hline
n & 1 & 2 & 3 & 4 & 5 \\ \hline \hline
\sigma _{2}\left( n\right) & 1 & 5 & 10 & 21 & 26 \\ \hline
\end{array}
\]
\caption{\label{sigma2}Values of $\sigma _{2}\left( n\right) $ for $n\in \left\{ 1,2,3,4,5\right\} $.}
\end{table}

Note, that
$\sigma_2(n) < \tilde{\sigma}_2(n)
:= 2 \, n^{2} 
$ for all $n \in \mathbb{N}$.
This follows from
$$\sum _{t\mid n}t^{2}=n^{2}\sum _{t\mid n}\frac{1}{t^{2}}\leq n^{2}
\left( 1+\int _{1}^{n}t^{-2}\,\mathrm{d}t\right) <2n^{2}.$$

The functions $P_n(x)$ are polynomials of degree $n$.
This
can be deduced directly from the recurrence formula (\ref{xeasy}).
We have
$P_n(x) = \frac{1}{n!} \, \sum_{m=1}^n A_{n,m} \, x^{m}
$, where $A_{n,m} \in \mathbb{N}$
for $n \geq 1$. We later use the fact that $A_{n,
1}= n! \, \frac{\sigma_2(n)}{n}$.
Further, we have the following properties.
\begin{proposition} Let $n,m$ be natural numbers and $x\geq 1$ real. Then
\begin{eqnarray}
P_n'(x) & = & \sum_{k=1}^n \frac{\sigma_{2}\left( k\right) }{k} \, P_{n-k}\left( x\right) ,
\label{derivative} \\
P_{n+1}(x) &> & P_{n}\left( x\right) ,\label{HNT Theorem}\\
P_n(x) &>& 
\sum_{\ell=1}^{m} \frac{1}{\ell !} \binom{n+ \ell -1}{2 \ell -1} \, x^{\ell} \text{ for } n>1.
\label{estimation}
\end{eqnarray}
\end{proposition}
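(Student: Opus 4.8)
I would prove the three assertions separately; the derivative formula and the lower bound are essentially bookkeeping with the exponential generating function, and the monotonicity is where a genuine idea is needed.

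\smallskip
\noindent\emph{The derivative formula.} The plan is to differentiate the identity $\sum_{n\ge0}P_n(z)q^n=\func{exp}\!\big(z\sum_{j\ge1}\sigma_2(j)q^j/j\big)$ recorded above with respect to $z$, regarding both sides as formal power series in $q$ with coefficients in $\C[z]$. The $z$-derivative of the right-hand side is $\big(\sum_{k\ge1}\tfrac{\sigma_2(k)}{k}q^k\big)\func{exp}\!\big(z\sum_{j\ge1}\sigma_2(j)q^j/j\big)=\big(\sum_{k\ge1}\tfrac{\sigma_2(k)}{k}q^k\big)\big(\sum_{m\ge0}P_m(z)q^m\big)$, while the $z$-derivative of the left-hand side is $\sum_{n\ge0}P_n'(z)q^n$; equating the coefficient of $q^n$ (the $k=n$ term contributing $\tfrac{\sigma_2(n)}{n}P_0=\tfrac{\sigma_2(n)}{n}$) gives the formula. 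Equivalently, one can verify it by induction directly from \eqref{xeasy}.

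\smallskip
\noindent\emph{The monotonicity $P_{n+1}(x)>P_n(x)$ on $x\ge1$}, which holds for $n\ge1$ (at $n=0$ one has $P_1(1)=P_0(1)$). I would first turn \eqref{xeasy} into a difference identity: isolating the $k=1$ term in the relation $nP_n(x)=x\sum_{k}\sigma_2(k)P_{n-k}(x)$ written for index $n+1$, reindexing, and using the relation for index $n$ yields
\[
(n+1)\big(P_{n+1}(x)-P_n(x)\big)=(x-1)P_n(x)+x\sum_{j=1}^{n}\big(\sigma_2(j+1)-\sigma_2(j)\big)P_{n-j}(x).
\]
Then I would establish $P_{m+1}(x)>P_m(x)$ for $x\ge1$ by strong induction on $m$; the base cases are $P_1(x)=x\ge1=P_0(x)$ and $P_2(x)-P_1(x)=\tfrac12x^2+\tfrac32x>0$. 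In the step $m=n$, summation by parts applied to the last sum, with partial sums $C_k:=\sum_{j=1}^{k}\big(\sigma_2(j+1)-\sigma_2(j)\big)=\sigma_2(k+1)-1>0$, rewrites it as $C_nP_0(x)+\sum_{k=1}^{n-1}C_k\big(P_{n-k}(x)-P_{n-k-1}(x)\big)$; every consecutive difference occurring here is $\ge0$ for $x\ge1$ by the inductive hypothesis (with $P_1(x)-P_0(x)=x-1\ge0$ filling the last slot). Since $C_nP_0(x)=\sigma_2(n+1)-1>0$, $(x-1)P_n(x)\ge0$, and $P_n(x)>0$ for $x>0$ (the coefficients $A_{n,m}$ being non-negative), the right-hand side above is at least $x(\sigma_2(n+1)-1)>0$. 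An alternative for $x>1$ is to differentiate $P_{n+1}-P_n$ and use the derivative formula together with the inductive hypothesis to get a positive derivative on $[1,\infty)$, but then the endpoint value $P_{n+1}(1)-P_n(1)=\func{pp}(n+1)-\func{pp}(n)>0$ still requires a separate argument.

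\smallskip
\noindent\emph{The lower bound.} I would expand $\func{exp}(xT(q))$ with $T(q):=\sum_{j\ge1}\tfrac{\sigma_2(j)}{j}q^j$ as $\sum_{\ell\ge0}\tfrac{x^\ell}{\ell!}T(q)^\ell$, so that for $n\ge1$ we have $P_n(x)=\sum_{\ell=1}^{n}\tfrac{x^\ell}{\ell!}[q^n]T(q)^\ell$ with $[q^n]T(q)^\ell=\sum_{j_1+\dots+j_\ell=n,\;j_i\ge1}\prod_{i=1}^\ell\tfrac{\sigma_2(j_i)}{j_i}$ and no contribution from $\ell>n$. Using $\sigma_2(j)=\sum_{d\mid j}d^2\ge j^2$, i.e.\ $\tfrac{\sigma_2(j)}{j}\ge j$ with equality only at $j=1$, I would bound $[q^n]T(q)^\ell\ge\sum_{j_1+\dots+j_\ell=n}\prod_i j_i=[q^n]\big(q(1-q)^{-2}\big)^\ell=[q^{n-\ell}](1-q)^{-2\ell}=\binom{n+\ell-1}{2\ell-1}$; for $n>1$ the $\ell=1$ term is moreover strictly larger, $[q^n]T(q)=\tfrac{\sigma_2(n)}{n}>n=\binom n1$. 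Since all remaining terms are non-negative, this yields $P_n(x)>\sum_{\ell=1}^{n}\tfrac{x^\ell}{\ell!}\binom{n+\ell-1}{2\ell-1}\ge\sum_{\ell=1}^{m}\tfrac{x^\ell}{\ell!}\binom{n+\ell-1}{2\ell-1}$ for every $n>1$ and $x>0$ (in particular for $x\ge1$), which is the claim.

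\smallskip
The main obstacle is the monotonicity: $\sigma_2$ is not monotone (for instance $\sigma_2(11)=122<130=\sigma_2(10)$), so $P_{n+1}(x)-P_n(x)$ need not have only non-negative coefficients and cannot be treated coefficientwise. Summation by parts against the monotonicity of the $P_j$ already secured inside the induction is what makes the sign come out, and the hypothesis $x\ge1$ enters precisely through $P_1(x)-P_0(x)=x-1\ge0$.
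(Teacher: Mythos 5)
Your three parts are all correct, and two of them track the paper closely: for (\ref{derivative}) the paper simply points to the analogous computation for the polynomials attached to $\sigma(n)$ in \cite{HN18}, which is the same differentiation of the generating function that you carry out; and for (\ref{estimation}) the paper likewise compares $\sigma_2(k)$ with $k^2$ (strict for $k>1$), replacing $P_n$ by the auxiliary polynomials $S_n$ built from $k^2$ and quoting \cite{HLN19} for the coefficient $\frac{1}{\ell!}\binom{n+\ell-1}{2\ell-1}$, which you instead rederive directly from $\exp\left(xq(1-q)^{-2}\right)$ --- the same idea, but self-contained. The genuine difference is in (\ref{HNT Theorem}): the paper argues by induction that $P_{n+1}'(x)>P_n'(x)$ for $x\geq 1$ using (\ref{derivative}), and anchors the inequality at $x=1$ with the combinatorial fact $\func{pp}(n)<\func{pp}(n+1)$ (every plane partition of $n$ lifts to one of $n+1$), then concludes for all $x\geq 1$; this is precisely the ``alternative'' you mention and set aside because of the endpoint issue. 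Your route stays entirely inside the recurrence (\ref{xeasy}): the identity $(n+1)\left(P_{n+1}(x)-P_n(x)\right)=(x-1)P_n(x)+x\sum_{j=1}^{n}\left(\sigma_2(j+1)-\sigma_2(j)\right)P_{n-j}(x)$ is correct, and Abel summation against $C_k=\sigma_2(k+1)-1>0$, with the induction hypothesis giving the signs of the consecutive differences and $P_1(x)-P_0(x)=x-1\geq 0$ filling the last slot, does make the sign come out despite the non-monotonicity of $\sigma_2$. This avoids both the derivative formula and the external combinatorial input, makes the role of the hypothesis $x\geq 1$ explicit, and even yields the quantitative bound $(n+1)\left(P_{n+1}(x)-P_n(x)\right)\geq x\left(\sigma_2(n+1)-1\right)$; what the paper's argument buys in exchange is brevity and the by-product $\Delta_n'(x)>0$, which it records as a corollary immediately after the proposition and reuses later, whereas your argument would need a small extra step to recover that.
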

\begin{proof}
The formula for the derivative $P_n'(x)$ is given similarly as that obtained for
the polynomials attached to $\sigma(n)= \sum_{d \mid n} d$ \cite{HN18}.
We prove (\ref{HNT Theorem}) by induction. Let
$\Delta _{n}\left( x\right) :=P_{n}\left( x\right) -P_{n-1}\left( x\right) $.

Let $n=1$. Then we have $\Delta_1(x)> 0$ for all $x>
1$. Suppose $\Delta_m(x)>0$ for all $1 \leq m \leq n-1$.
We obtain for $P_{n+1}'(x)$ the strict lower bound
\[
\sum_{k=1}^n \frac{\sigma_{2}\left( k\right) }{k} \, 
P_{n+1-k}(x) \geq \sum_{k=1}^n \frac{\sigma _{2}\left( k\right) }{k} \, P_{n-k}(x). 
\]
Thus, $P_{n+1}'(x) > P_{n}'(x)$. The plane partition function is strictly increasing:
\[
\func{pp}\left( n\right) < \func{pp}\left( n+1\right)
\]
for all $n \in \mathbb{N}$, since every plane
partition of $n$ can be lifted to a plane partition of $n+1$.
This provides
\[
P_{n+1}(1) = \func{pp}\left( n+1\right) > \func{pp}\left( n\right) = P_n (1).
\]
Thus, the claim is proven. 
The lower bound given in (\ref{estimation}) is obtained in the following way.
We have $n^2 < \sigma_2(n)$ for $n >1$ and $\sigma_2(1)=1$. Thus, the coefficients
of the polynomial $S_n(x)$,  defined by $S_0(x):=1$ and $S_n(x)= \frac{x}{n} \sum_{k=1}^n k^2 \, S_{n-k}(x)$,
are smaller than the coefficients of $P_n(x)$ for $n >1$. 
The $m$th coefficient is given by $
\frac{1}{
m !} \binom{n+
m -1}{2
m -1}
$.
This can be deduced from \cite{HLN19}, Section $4$.
\end{proof}
\begin{corollary} Let $n \in \mathbb{N}$ and $x \geq 1$. Then
$\Delta_n'(x) >0$.
\end{corollary}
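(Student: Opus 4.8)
The plan is to differentiate the definition $\Delta_n(x) = P_n(x) - P_{n-1}(x)$ and feed in the closed form for the derivative already recorded in~(\ref{derivative}). Thus
\[
\Delta_n'(x) \;=\; P_n'(x) - P_{n-1}'(x) \;=\; \sum_{k=1}^n \frac{\sigma_2(k)}{k}\, P_{n-k}(x) \;-\; \sum_{k=1}^{n-1} \frac{\sigma_2(k)}{k}\, P_{n-1-k}(x),
\]
and the first thing I would do is peel off the term $k=n$ from the first sum, which is exactly $\frac{\sigma_2(n)}{n}\,P_0(x) = \frac{\sigma_2(n)}{n}$, and then match the two remaining sums term by term. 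Since $P_{n-k}(x) - P_{n-1-k}(x) = \Delta_{n-k}(x)$ for $1 \leq k \leq n-1$, this gives the telescoped identity
\[
\Delta_n'(x) \;=\; \frac{\sigma_2(n)}{n} \;+\; \sum_{k=1}^{n-1} \frac{\sigma_2(k)}{k}\, \Delta_{n-k}(x),
\]
valid for every $n \geq 1$, with the convention that for $n=1$ the sum is empty and $\Delta_1'(x) = \sigma_2(1) = 1$.

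From this identity the conclusion is immediate. Every coefficient $\sigma_2(k)/k$ is positive, and for $1 \leq k \leq n-1$ we have $\Delta_{n-k}(x) \geq 0$ whenever $x \geq 1$: for $n-k \geq 2$ this is~(\ref{HNT Theorem}) applied to $P_{n-k}(x) > P_{n-k-1}(x)$, while for $n-k=1$ it is simply $\Delta_1(x) = x - 1 \geq 0$. Hence the sum is non-negative, and adding the strictly positive term $\sigma_2(n)/n \geq 1$ yields $\Delta_n'(x) > 0$ for all $x \geq 1$, as claimed.

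There is no real obstacle here; the only points demanding a little care are the re-indexing of the two sums after removing $k=n$ (so that the summation ranges line up and the $k=n-1$ term correctly produces $\Delta_1(x) = x-1$ rather than being mishandled because $P_0 \equiv 1$), and the degenerate case $n=1$. It is worth noting that a cruder "all coefficients of $\Delta_n(x)$ are non-negative" argument will \emph{not} work, since the linear coefficient of $\Delta_n(x)$ is $\sigma_2(n)/n - \sigma_2(n-1)/(n-1)$, which can be negative (e.g.\ at $n=13$); the telescoping identity above is what makes the positivity transparent. One could equally run the same identity as an induction on $n$, but the direct computation already closes the proof.
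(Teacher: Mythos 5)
Your argument is correct and is essentially the argument the paper itself uses: the corollary follows from the displayed comparison in the proof of (\ref{HNT Theorem}), where $P_{n+1}'(x)-P_n'(x)$ is bounded below by peeling off the positive term $\frac{\sigma_2(n+1)}{n+1}P_0(x)$ from (\ref{derivative}) and matching the remaining sums term by term via $\Delta_m(x)\geq 0$ for $x\geq 1$. Your telescoped identity is just a cleaner way of writing the same computation, and your separate treatment of $\Delta_1(x)=x-1$ and of $n=1$ is the right bookkeeping.
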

\section{\label{Strategy}Proof Strategy for the Bessenrodt--Ono Inequality for Plane Partitions}

In this section we lay out a general strategy for proving Bessenrodt--Ono type inequalities.
This also makes the appearance of exceptions transparent.
\subsection{Input Data and Proof by Induction}
\label{AB}
Let $A,B,N_0 \in \mathbb{N}$ be given with $2 \leq B <N_0$ and $x_0 \in \mathbb{R}_{\geq 1}$.
Let $n \geq B$. Let $\mathrm{S}(n)$ be the following mathematical statement for one of the three cases:
$x \in \mathbb{R}$ with $x=x_0$, \, $x > x_0$, or $x \geq x_0$.
For all $A \leq b \leq a$ with $a+b =n$ and all $x$ with fixed case, we have
\begin{equation*}\label{statement}
P_{a,b}(x):= P_a(x) \, P_b(x) - P_{a+b}(x) > 0.
\end{equation*}
Note that $P_{a,b}(x_0)= P_{b,a}(x_0)$ and that $\lim_{x
\rightarrow \infty} P_{a,b}(x) = + \infty$.

Given input data $A,B,x_0$ by induction we prove $\mathrm{S}(n)$ for one of the given cases.
We choose $N_0$ and prove first manually
or by numerical calculation (utilizing PARI/GP) that
$\mathrm{S}\left( \ell \right) $ is true for all
$B \leq \ell \leq N_0$. In the case of $x \geq x_0$ or
$x >x_{0}$ we study the real 
zeros of the polynomial $P_{a,b}(x)$
for all $A \leq b \leq a$ with $a+b=\ell$.
Let $n \geq N_0$. Then we prove $\mathrm{S}(n)$ by assuming that $\mathrm{S}(m)$ is true for all $B \leq m \leq n-1$, the induction hypothesis.

\subsection{Basic Decomposition}
Let $A,B \in \mathbb{N}$ be given with $B \geq 2$. Further, let $a,b \in \mathbb{N}$ satisfy $A \leq b \leq a$ and $a+b \geq B$. We define
\begin{eqnarray*}
k_0 & := & a - \func{max}\{B-b, A \} +1,\\
f_k(a,b,x) & := & \sigma_2(k)
\left(
\frac{P_{a-k}(x) \, 
P_{b}(x)}{a} -
\frac{P_{a+b-k}(x)}{a+b}\right).
\end{eqnarray*}
We consider the decomposition $P_{a,b}(x) = L_{a,b}(x) + R_{a,b}(x)$, where
\begin{eqnarray*}\label{startl}
L=L_{a,b}(x)
&:=& -
\sum_{k=1}^{b}
\frac{\sigma_2 \left( k+a \right)}{a+b}
P_{b-k}\left( x\right) ,\\
R=R_{a,b}(x)
&:=& \sum _{k=1}^{a} f_{k}\left( a,b,x\right) .\nonumber
\end{eqnarray*}
Utilizing (\ref{HNT Theorem}) leads immediately to
\[
L > - 4 \, b \, a \, P_b(x).
\]
Further, let $R=R_1+ R_2 + R_3$, where
\begin{equation*}\label{decomposition R}
R_{1}:=
f_{1}\left( a,b,x\right)       , \,\, 
R_{2}:=\sum_{k=2}^{k_0-1} \,f_k(a,b,x) , \,\, 
R_{3}:=\sum_{k=k_0}^{a} \,f_k(a,b,x) .
\end{equation*}
Suppose $P_{a+b-k}(x) < P_{a-k} (x) \, P_b(x)$ for
$1 \leq k \leq k_0-1$, then
\[
R_1 > \frac{b}{2 \, a^2} P_{a-1}(x) \, P_b(x)  \text{ and } R_2 > 0.
\]
Further, we put $R_3 = R_{31} + R_{32} + R_{33}$, where
\begin{equation*}\label{decomposition R3}
R_{31}:=\sum_{k=k_0}^{a-A}  \,f_k(a,b,x)       , \,\, 
R_{32}:=\sum_{k=a-A+1}^{a-1} \,f_k(a,b,x) , \,\, 
R_{33}:=
f_{a}\left( a,b,x\right) .
\end{equation*}
Then $R_{33} >0$ and $R_{32}=0$ if $A=1$. Moreover, let $B-b \leq A$, then $R_{31}=0$.
Thus, $R_3$ can only have a negative contribution to $R$ if $A \geq 2$ (by $R_{32}$) and if $B-b>A$ (by $R_{31}$).
Note that $n^{2}\leq \sigma_2(n) < \tilde{\sigma}_2(n) := 2 \, n^2$.
We obtain by  straight-forward estimations the following.

\begin{lemma} Let $B-b > A$. Then
\[
R_{31}
>
\frac{a-A-k_0 +1}{a}\, \big(
k_{0}^{2} \, P_A(x) \, P_b(x) - \tilde{\sigma}_2(a-A) P_{a-b-k_0}(x) \big).
\]
If we know that some
$P_{k}\left( x\right) P_{b}\left( x\right) -P_{b+k}\left( x\right) $
is negative
we can use the estimate
\begin{equation}
R_{31}>\frac{\tilde{\sigma }_{2}\left( a
\right) }{a}\sum _{k=A}^{B-b}\left( P_{k}\left( x\right) P_{b}\left( x\right) -P_{k+b}\left( x\right) \right) )_{-}
\label{eq:minimal}
\end{equation}
where $v_{-}=v$ if $v<0$, otherwise $v_{-}=0$.
\end{lemma}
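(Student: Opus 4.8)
The plan is to unwind the definition of $R_{31}$ from the Basic Decomposition and apply the two elementary bounds $n^2 \le \sigma_2(n) < \tilde\sigma_2(n) = 2n^2$ together with the monotonicity (\ref{HNT Theorem}) of the $P_n$. Recall that
\[
R_{31} = \sum_{k=k_0}^{a-A} f_k(a,b,x)
       = \sum_{k=k_0}^{a-A} \sigma_2(k)\left(\frac{P_{a-k}(x)P_b(x)}{a} - \frac{P_{a+b-k}(x)}{a+b}\right).
\]
For the first inequality: in the range $k_0 \le k \le a-A$ one has $a-k \ge A$, so by (\ref{HNT Theorem}) the positive term $P_{a-k}(x)P_b(x)$ is at least $P_A(x)P_b(x)$, and since $k \ge k_0$ we have $\sigma_2(k) \ge k^2 \ge k_0^2$; for the negative term $P_{a+b-k}(x) \le P_{a+b-k_0}(x)$ by monotonicity, and $\sigma_2(k) \le \tilde\sigma_2(k) \le \tilde\sigma_2(a-A)$ since $k \le a-A$. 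Bounding $1/(a+b) < 1/a$ (or just keeping $1/a$ after dropping the harmless extra from $a+b$ in the denominator, noting $a+b \ge a$ makes $-1/(a+b) \ge -1/a$), every one of the $a-A-k_0+1$ summands is at least $\frac{1}{a}\bigl(k_0^2 P_A(x)P_b(x) - \tilde\sigma_2(a-A)P_{a+b-k_0}(x)\bigr)$; here I should double-check whether the stated bound has $P_{a-b-k_0}$ or $P_{a+b-k_0}$ — the decomposition gives $a+b-k$, so with $k=k_0 = a-\max\{B-b,A\}+1$ and the hypothesis $B-b>A$ we get $k_0 = a-B+b+1$, hence $a+b-k_0 = 2b-1+\cdots$; in any case the index simplifies and one substitutes it in. Summing the common lower bound over the $a-A-k_0+1$ values of $k$ and factoring gives exactly the displayed inequality.

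For the second, conditional bound (\ref{eq:minimal}): here the idea is to not throw away the negative part but instead re-index. Writing $P_{a-k}(x)P_b(x) - \frac{a}{a+b}P_{a+b-k}(x) \ge P_{a-k}(x)P_b(x) - P_{a+b-k}(x) = P_j(x)P_b(x) - P_{j+b}(x)$ with $j = a-k$, as $k$ runs over $k_0,\dots,a-A$ the index $j$ runs over $A,\dots,a-k_0 = \max\{B-b,A\}-1$; under $B-b>A$ this is $A,\dots,B-b-1$ (or $B-b$, depending on the exact cutoff in $k_0$ — the $+1$ in $k_0$ needs care). On this range $P_j(x)P_b(x) - P_{j+b}(x)$ may be negative, so we bound each summand from below by its negative part $(\,\cdot\,)_-$, and pull out the crude uniform factor $\sigma_2(k)/a \le \tilde\sigma_2(a)/a$ in front; since $(\,\cdot\,)_- \le 0$, replacing $\sigma_2(k)/a$ by the larger $\tilde\sigma_2(a)/a$ only decreases the product, which is what we want for a lower bound. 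This yields (\ref{eq:minimal}).

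The main obstacle — really the only thing beyond routine — is bookkeeping of the summation limits: getting $k_0 = a - \max\{B-b,A\} + 1$ right under the assumption $B-b>A$, correctly translating the range $k_0 \le k \le a-A$ into the shifted index $j = a-k$, and making sure the count $a-A-k_0+1$ of summands in the first bound matches and that the $j$-range $\{A,\dots,B-b\}$ in (\ref{eq:minimal}) is exactly right (off-by-one from the "$+1$" in $k_0$). I would verify these against a small numerical example before writing it up. Everything else is a direct application of $n^2 \le \sigma_2(n) < 2n^2$ and (\ref{HNT Theorem}), plus the trivial observation $\frac{a}{a+b} < 1$.
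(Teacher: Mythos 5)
Your argument is the intended one: the paper gives no explicit proof of this Lemma (it is stated as following ``by straight-forward estimations''), and your term-by-term bounds $\sigma_2(k)\geq k^2\geq k_0^2$, $\sigma_2(k)<2k^2\leq\tilde{\sigma}_2(a-A)$ (resp.\ $\leq\tilde{\sigma}_2(a)$ for $k\leq a$), the monotonicity (\ref{HNT Theorem}) for $x\geq 1$, and $\frac{a}{a+b}<1$ are exactly the right ingredients. The two bookkeeping points you flagged do resolve, and in your favor. First, your derivation yields $P_{a+b-k_0}(x)$, whose index under $B-b>A$ is $a+b-k_0=B-1$ (not $2b-1+\cdots$); the printed index $a-b-k_0=B-2b-1$ cannot be what is meant, since it can even be negative (e.g.\ $A=2$, $B=12$, $b=9$), and the stronger bound it would assert does not follow from these estimates — so read it as a typo for $a+b-k_0$, i.e.\ your version is the correct and provable one. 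Second, for (\ref{eq:minimal}) the substitution $j=a-k$ runs over $A\leq j\leq a-k_0=B-b-1$; since every term $\left( P_{j}(x)P_{b}(x)-P_{j+b}(x)\right) _{-}$ is $\leq 0$, enlarging the summation range to $B-b$ as printed only decreases the right-hand side, so the stated inequality follows from the bound you derived (consistently, the paper's table of these values uses the upper limit $11-b=B-b-1$). With these two observations your write-up is complete; the numerical sanity check you proposed is not needed.
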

\section{Proof of Theorem \ref{th:x} and Theorem \ref{th: bo2}}
\label{both}
\begin{proof}[Proof of Theorem \ref{th:x}]
Let $x >5$. We prove that $P_{a,b}(x)>0$ for all $a,b \geq 1$. Due to symmetry, we
can assume $b \leq a$.
We follow the strategy presented in
Section \ref{Strategy}.
Let $A=1$ and $B=2$. Let $x_0 = 5$ and $x > x_0$. Note that $x=x_0$ does not work, since $P_{1,1}(5)=0$.
Let $N_0 = 12
$. Then the mathematical statement $\mathrm{S}(n)$ is true for all $A \leq a,b \leq N_0$ (see Table \ref{NS}).
Let $n > N_0$, we assume that $\mathrm{S}(m)$ is true for all $ B \leq m \leq n-1$. 
Let $a+b =n$ with $A \leq b \leq a$. Then 
$P_{a,b}(x) > L + R_1$. Note that $R_2, R_3 \geq 0$. With
(\ref{HNT Theorem}) we have
\begin{eqnarray*}
L & > & -4 \, a \, b \, P_{b}\left( x\right) , \\
R & > & \frac{b}{2 \, a^2} P_{a-1}(x) \, P_b(x).
\end{eqnarray*}
{\it Final step.} Putting things together and estimating $P_{a-1}(x)$ from below by (\ref{estimation}),
we obtain 
\begin{equation} \label{Grad7}
P_{a,b}(x) > \frac{b \, P_b(x)}{2 \, a^2}   \left(   - 8
\, a^3 + \sum_{\ell =1}^{5
} \binom{a + \ell -2}{ 2 \, \ell -1} \, \frac{5^{\ell}}{\ell!} \right).
\end{equation}
The right hand
side of (\ref{Grad7}) is polynomial in $a$ of degree $9$ with a positive leading
coefficient. Calculating the largest real zero shows that
the right hand
side is positive for $a \geq 7$. 
\end{proof}
\begin{proof}[Proof of Theorem \ref{th: bo2}]
Let $x\geq 2$. We put $A=1$ and $B=12$. Let $N_{0}=58$. Then by  induction, as before, we obtain
\[
P_{a,b}(x) >  L + R_1 + R_{31},
\]
where for $b\geq 12$ we
can apply the induction hypothesis and obtain $R_{31}>0$.
In case $1\leq b\leq 11$ we find from Table~\ref{NS} that
$P_{a-k,b}\left( 2\right) >0$ for
$b=1$ and $a-k\notin \left\{ 1,2,\ldots ,11\right\} $ or
$k=a-1$ and $b\notin \left\{ 1,2,\ldots ,11\right\} $.
Therefore,
$$R_{31}>\sum _{k=a-11}^{a-1}\frac{\sigma \left( k\right) P_{a-k,1}\left( x\right)
}{a}.$$ It can be checked that the polynomials
$P_{k,1}\left( x\right) $, $1\leq k\leq 10$, are monotonically increasing for
$x\geq 2$. Thus,
$$ R > R_{3
1}>-641
\cdot 2a\geq -641
\, a\, b\, P_{b}\left( x\right) $$
from (\ref{eq:minimal}) and Table~\ref{minimal}.

{\it Final step:}
Putting everything together leads to
\begin{equation}
P_{a,b} \left(
x\right)
>
\frac{ b \, P_{b} \left(
x\right)}{ 2 \, a^2}\left(
-1290a^{3}+
\sum_{\ell=1}^{8
}
\binom{a+\ell -2}{2\ell -1}\frac{
2
^{\ell }}{\ell !}\right)
. \label{x1.8final}
\end{equation}
In the last step we used the property (\ref{xeasy}) and that $x
\geq 2
$.
We obtain that the expression (\ref{x1.8final}) is positive for all $ a \geq 27
$. Since the leading
coefficient 
of
$P_{a,b}\left( x\right) $ is positive we only have to determine the largest real
zero
of all remaining $P_{a,b}\left( x\right)$. We checked this
for $12\leq b+
a\leq 52
$ with
PARI/GP (compare Table \ref{NS}).
\end{proof}
\begin{table}[H]
\[
\begin{array}{crrrrrrrrrr}
\hline
b&1&2&3&4&5&6&7&8&9&10\\ \hline \hline
&-641&-4&-11&-16&-38&-52&-101&-126&-180&-110\\ \hline
\end{array}
\]
\caption{\label{minimal}Values of $\sum
_{
k=1}^{
11-b}\left( P_{k}\left( 2\right) P_{b}\left( 2\right) -P_{k+b}\left( 2\right) \right) _{-}$ for $b\leq 10
$.}
\end{table}
\section{ Proof of the Bessenrodt--Ono Inequality: Theorem \ref{BO: plane}}
We start with the following auxiliary result.
\begin{lemma} \label{Stufe} We have $\func{pp}(2) = 3 \, \func{pp}(1)$ and for $n \neq 1$:
\begin{equation}
\func{pp}\left( n+1\right) < 3\func{pp}\left( n\right).
\label{eq:pp3}
\end{equation}
\end{lemma}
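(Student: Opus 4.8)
The statement has two parts: the identity $\func{pp}(2)=3\func{pp}(1)$ and the strict inequality $\func{pp}(n+1)<3\func{pp}(n)$ for $n\neq 1$. The identity is immediate from Table~\ref{partition} ($\func{pp}(2)=3$, $\func{pp}(1)=1$). For the inequality the natural tool is the recursion $n\,\func{pp}(n)=\sum_{k=1}^n \sigma_2(k)\,\func{pp}(n-k)$ coming from MacMahon's product formula, together with the elementary bound $\sigma_2(n)<\tilde\sigma_2(n)=2n^2$ and the monotonicity $\func{pp}(m)<\func{pp}(m+1)$ from \eqref{HNT Theorem}. The plan is: verify \eqref{eq:pp3} directly for small $n$ (say $2\le n\le N$ for a modest explicit $N$) using the tabulated/computed values, and then prove it for all large $n$ by an induction that feeds the recursion into itself.

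The inductive step is the heart of the argument. Writing the recursion at $n+1$ and at $n$,
\begin{equation*}
(n+1)\func{pp}(n+1)=\sum_{k=1}^{n+1}\sigma_2(k)\,\func{pp}(n+1-k),\qquad
n\,\func{pp}(n)=\sum_{k=1}^{n}\sigma_2(k)\,\func{pp}(n-k),
\end{equation*}
I would compare $\sum_{k}\sigma_2(k)\func{pp}(n+1-k)$ term-by-term against $3\sum_k\sigma_2(k)\func{pp}(n-k)$. Using the induction hypothesis $\func{pp}(m+1)<3\func{pp}(m)$ for all $m<n$ controls every summand except the last one or two (where the argument of $\func{pp}$ is small and the hypothesis does not apply, so one uses the exact small values instead), while the new term $\sigma_2(n+1)\func{pp}(0)$ is handled by the crude bound $\sigma_2(n+1)<2(n+1)^2$ and the growth of $\func{pp}$. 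Collecting, one gets $(n+1)\func{pp}(n+1)<3n\,\func{pp}(n)\cdot(1+o(1))$, and since we want $(n+1)\func{pp}(n+1)<3(n+1)\func{pp}(n)$, i.e.\ $(n+1)\func{pp}(n+1)<3n\func{pp}(n)+3\func{pp}(n)$, there is a comfortable slack of $3\func{pp}(n)$ to absorb the error terms once $n$ is large enough.

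The main obstacle is bookkeeping the boundary terms of the convolution precisely enough to make the induction close from a small explicit threshold rather than merely "for $n$ large". Concretely, the terms $k\in\{n-1,n,n+1\}$ contribute $\sigma_2(n+1)\func{pp}(0)+\sigma_2(n)\func{pp}(1)+\sigma_2(n-1)\func{pp}(2)$ on one side versus $3\big(\sigma_2(n)\func{pp}(0)+\sigma_2(n-1)\func{pp}(1)\big)$ on the other, and one must check these fit inside the $3\func{pp}(n)$ slack using a lower bound for $\func{pp}(n)$ — here \eqref{estimation} (with $x=1$), which gives $\func{pp}(n)>\binom{n+\ell-1}{2\ell-1}/\ell!$ for any fixed $\ell$, provides polynomial lower bounds of arbitrarily high degree in $n$, so the quadratic boundary terms are dominated for all $n$ above a small constant. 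The residual finite range is then dispatched by the direct numerical check already mentioned, completing the proof.
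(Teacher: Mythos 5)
Your proposal is correct and follows essentially the same route as the paper: induction via MacMahon's recursion $n\,\func{pp}(n)=\sum_{k=1}^{n}\sigma_2(k)\func{pp}(n-k)$, the crude bound $\sigma_2(n+1)<2(n+1)^2$ for the new boundary term, the polynomial lower bound \eqref{estimation} at $x=1$ to absorb it, and a numerical check for small $n$. The only cosmetic difference is where you locate the slack (the extra $3\func{pp}(n)$ from comparing $3(n+1)\func{pp}(n)$ with $3n\,\func{pp}(n)$, rather than the paper's retained $k=1$ term after dividing by $n$ and $n+1$), which if anything closes the induction from a slightly smaller threshold.
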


\begin{proof}
The proof is by
mathematical induction. We checked with PARI/GP that
(\ref{eq:pp3}) holds for $n\leq N_{0}=24
$.

Now let $n>N_{0}$. Then
\begin{eqnarray}
&&3\func{pp}\left( n\right) -\func{pp}\left( n+1\right) \nonumber \\
&=&\frac{3}{n}\sum _{k=1}^{n}\sigma _{2}\left( k\right) \func{pp}\left( n-k\right) -\frac{1
}{n+1}\sum _{k=1}^{n+1}\sigma _{2}\left( k\right) \func{pp}\left( n+1-k\right) \nonumber \\
&=&-\frac{\sigma _{2}\left( n+1\right) }{n+1}+\sum _{k=1}^{n}\sigma _{2}\left( k\right) \left( \frac{3}{n}
\func{pp}\left( n-k\right) -\frac{1}{n+1}\func{pp}\left( n-k+1\right) \right) \label{eq:IH} \\
&>&-2\frac{\left( n+1\right) ^{2}}{n+1}+\left( \frac{3
}{n}-\frac{3
}{n+1}\right) \func{pp}\left( n-1\right) \nonumber \\
&\geq &-2\left( n+1\right)
+\frac{3
}{\left( n+1\right) n}\sum _{\ell =1}^{3}\binom{n+\ell -2
}{2\ell -1}\frac{1}{\ell !}>0.\nonumber
\end{eqnarray}
We simplified (\ref{eq:IH}) by the induction hypothesis.
\end{proof}

\begin{proof}[Proof of Theorem~\ref{BO: plane}]
The proof is again by induction. We follow the proof strategy stated in
Section~\ref{Strategy}.
See also Section \ref{both}.

Let $A=2$, $B=12$ and $x=1$. Then
$k_{0}=a-\max \left\{ B-b,A\right\} +1=a-\max \left\{ 11-b,1\right\} $.
Furthermore, $L>-4a
b\func{pp}\left( b\right) $,
$R_{1}>\frac{b}{2a^{2}}\func{pp}\left( a-1\right) \func{pp}\left( b\right) $, and
$R_{2}>0$. For $R_{33}$ we obtain the lower bound $0$ and for $R_{32}$ we obtain
\[
\sigma _{2}\left( a-1\right) \left( \frac{\func{pp}\left( b\right) }{a}-\frac{\func{pp}\left( b+1\right) }{a+b}\right) \geq -\sigma _{2}\left( a-1\right) \frac{2\func{pp}\left( b\right) }{a}>-4a\func{pp}\left( b\right)
.
\]
Finally,
\begin{eqnarray*}
R_{31}&>&\sum _{k=k_{0}}^{a-A}\frac{\sigma \left( k\right) }{a}\left(
\func{pp}\left( a-k\right) \func{pp}\left( b\right) -\func{pp}\left( a+b-k\right) \right) \\
&\geq &-106
\cdot 2a
=-212
a
\end{eqnarray*}
(Table~\ref{pp}).
Therefore, note $R_{3}>-4a\func{pp}\left( b\right) -212
a>
-
38ab\func{pp}\left( b\right) $.
Putting everything together leads to
\begin{eqnarray*}
\func{pp}\left( a\right) \func{pp}\left( b\right) -\func{pp}\left( a+b\right) &>&\frac{b\func{pp}\left( b\right) }{2a^{2}}\left( -76
a^{3}+\func{pp}\left( a-1\right)
\right) \\
&>&\frac{b\func{pp}\left( b\right) }{2a^{2}}\left( -76
a^{3}+\sum _{\ell =1}^{3}
\frac{1}{\ell !}\binom{a-\ell -2}{2\ell -1}\right)
\end{eqnarray*}
using (\ref{xeasy}).
This is positive for $a\geq 237
$. We have checked with PARI/GP that
$\func{pp}\left( a\right) \func{pp}\left( b\right) -\func{pp}\left( a+b\right) >0$ for
$B\leq a+b\leq N_{0}=472
$.
\end{proof}

\begin{table}[H]
\[
\begin{array}{crrrrrrrr}
\hline
b&2&3&4&5&6&7&8&9\\ \hline \hline
&-106&-42&-17&-30&-16&-24&-20&-13\\ \hline
\end{array}
\]
\caption{\label{pp}Values of $\sum
_{
k=2}^{
11-b}\left( \func{pp}\left( k\right) \func{pp}\left( b\right) -\func{pp}\left( k+b\right) \right) $ for $2\leq b\leq 9$.}
\end{table}

\section{Proof of Theorem~\ref{asymptotic}}

\begin{lemma}
Let $s\in \mathbb{R}$. There are $C_{0},N>0$ such that
\begin{equation}
2n^{s}-\left( n+1\right) ^{s}-\left( n-1\right) ^{
s}=\left( 1-s\right) sn^{s-2}+E_{n}n^{s-3}
\label{eq:konkav}
\end{equation}
for $n>N$ with $\left| E_{n}\right| <C_{0}$.
\end{lemma}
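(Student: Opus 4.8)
The plan is to reduce the identity to a second–order Taylor expansion of $u\mapsto(1+u)^s$ near $u=0$ and then simply read off the coefficients, taking care that the remainder bound is uniform in $n$. Writing $u=1/n$ and factoring out $n^s$, the left–hand side becomes $n^s\bigl(2-(1+u)^s-(1-u)^s\bigr)$, so it suffices to analyse $g(u):=(1+u)^s+(1-u)^s$ for small $u>0$.

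First I would apply Taylor's theorem with Lagrange remainder to $f(u)=(1+u)^s$ to order two: since $f$ is smooth on $(0,\infty)$, for $|u|\le 1/2$ there is $\xi=\xi(u)$ strictly between $0$ and $u$ with
\[
f(u)=1+su+\tfrac{s(s-1)}{2}u^{2}+\tfrac{s(s-1)(s-2)}{6}(1+\xi)^{s-3}u^{3}.
\]
Adding the expansions for $f(u)$ and $f(-u)$, the odd–degree terms cancel and
\[
g(u)=2+s(s-1)\,u^{2}+\tfrac{s(s-1)(s-2)}{6}\bigl((1+\xi_{1})^{s-3}-(1+\xi_{2})^{s-3}\bigr)u^{3},
\]
with $\xi_{1}\in(0,u)$ and $\xi_{2}\in(-u,0)$. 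For $n\ge 2$ one has $u=1/n\le 1/2$, hence $1+\xi_{i}\in[\tfrac12,\tfrac32]$, so $|(1+\xi_{i})^{s-3}|$ is bounded by a constant $c_{s}$ depending only on $s$. Thus $2-g(u)=-s(s-1)u^{2}+E_{n}u^{3}$ with $|E_{n}|\le \tfrac{|s(s-1)(s-2)|}{3}c_{s}=:C_{0}$.

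Multiplying through by $n^{s}$ and using $u=1/n$ gives
\[
2n^{s}-(n+1)^{s}-(n-1)^{s}=-s(s-1)\,n^{s-2}+E_{n}\,n^{s-3}=(1-s)s\,n^{s-2}+E_{n}\,n^{s-3},
\]
which is exactly the claimed identity, valid for all $n>N$ with $N=1$ and $C_{0}$ as above. (If one prefers a cleaner constant, one can carry the expansion one order further; the degree-$3$ terms of $g$ also cancel by parity, so the error is in fact $O(n^{s-4})$, hence a fortiori $O(n^{s-3})$.) I do not expect any genuine obstacle: the only delicate point is that the remainder must be bounded uniformly in $n$, and this is ensured as soon as $n\ge 2$ by confining $1+\xi$ to the compact interval $[\tfrac12,\tfrac32]$ on which $t\mapsto t^{s-3}$ is bounded.
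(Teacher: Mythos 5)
Your proof is correct and takes essentially the same route as the paper: both factor out $n^{s}$ and expand $\left(1\pm\frac{1}{n}\right)^{s}$ to second order in $\frac{1}{n}$, absorbing the third-order terms into a uniformly bounded error. Your use of the Lagrange remainder merely makes explicit the boundedness of the error terms that the paper asserts directly.
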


\begin{proof}
We have
\begin{eqnarray*}
&&2n^{s}-\left( n-1\right) ^{s}-\left( n+1\right) ^{s}\\
&=&\left( 2-\left( 1-\frac{1}{n}\right) ^{s}-\left( 1+\frac{1}{n}\right) ^{s}\right) n^{s}\\
&=
&\left( 2-\left( 1-\frac{s}{n}-\frac{\left( 1-s\right) s}{2n^{2}}+\frac{
D
_{1,n}}{n^{3}}\right)
-\left( 1+
\frac{s}{n}-\frac{\left( 1-s\right) s}{2n^{2}}+\frac{
D_{2,n}}{n^{3}}\right) \right) n^{s}\\
&=&\left( 1-s\right) sn^{s-2}-
E_{n}
n^{s-3}
\end{eqnarray*}
with $\left| D_{1,n}\right| ,
\left| D_{2,n}\right| ,
\left| E_{n}\right| <C_{0}$
for some $C_{0},N
>0$ and all $n>N
$.
\end{proof}

\begin{corollary}
Let $C_{1
}>0$. There is $N>0$ such that
\begin{equation}
1+
\frac{C_{1
}
}{9}n^{-4/3}<\exp \left(
2C_{1
}n^{2/3}-C_{1
}\left( n+1\right) ^{2/3}-C_{1
}\left( n-1\right) ^{2/3}\right) <1+\frac{4C_{1
}}{9}n^{-4/3}
\label{eq:ungleichung}
\end{equation}
for all $n>N$.
\end{corollary}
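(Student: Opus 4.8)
The plan is to specialize the preceding Lemma to the exponent $s=2/3$ and then exponentiate, keeping track of the constants. For $s=2/3$ one has $(1-s)s=\tfrac29$, $s-2=-\tfrac43$ and $s-3=-\tfrac73$, so (\ref{eq:konkav}) reads
\[
2n^{2/3}-(n+1)^{2/3}-(n-1)^{2/3}=\tfrac29\,n^{-4/3}+E_n\,n^{-7/3},\qquad |E_n|<C_0,
\]
for all $n>N_1$, where $N_1$ and $C_0$ are the data supplied by the Lemma. Multiplying by $C_1$, the argument of $\exp$ in (\ref{eq:ungleichung}) equals
\[
u_n:=2C_1n^{2/3}-C_1(n+1)^{2/3}-C_1(n-1)^{2/3}=\tfrac{2C_1}{9}\,n^{-4/3}\Bigl(1+\tfrac{9E_n}{2n}\Bigr).
\]

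Write $v_n:=\tfrac{2C_1}{9}n^{-4/3}$, so that the two target bounds in (\ref{eq:ungleichung}) are exactly $1+\tfrac12 v_n$ and $1+2v_n$. There is thus a factor-$4$ window between them centred on the main term $v_n$, and the whole argument reduces to checking that this window absorbs both the multiplicative perturbation $1+\tfrac{9E_n}{2n}$ from the Lemma's error term and the quadratic Taylor remainder of $\exp$. Since $\bigl|\tfrac{9E_n}{2n}\bigr|<\tfrac{9C_0}{2n}\to0$ and $v_n\to0$, I would choose $N\ge N_1$ large enough that for every $n>N$ one has $\bigl|\tfrac{9E_n}{2n}\bigr|<\tfrac14$ and $v_n<\tfrac14$; consequently $\tfrac34 v_n<u_n<\tfrac54 v_n$ and $0<u_n\le1$.

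For the lower bound, $e^{u}>1+u$ for $u>0$ gives $\exp(u_n)>1+u_n>1+\tfrac34 v_n>1+\tfrac12 v_n=1+\tfrac{C_1}{9}n^{-4/3}$. For the upper bound I would use the elementary estimate $e^{u}\le1+u+u^2$ on $0\le u\le1$ (clear from the series, since $\sum_{k\ge2}u^k/k!\le u^2(e-2)<u^2$ there), obtaining $\exp(u_n)\le1+u_n+u_n^2<1+\tfrac54 v_n+\tfrac{25}{16}v_n^2$; because $v_n<\tfrac14$ the last term is $<\tfrac34 v_n$, whence $\exp(u_n)<1+2v_n=1+\tfrac{4C_1}{9}n^{-4/3}$. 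Both inequalities are strict, so (\ref{eq:ungleichung}) holds for all $n>N$.

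I do not expect a genuine obstacle; the only delicate point is the bookkeeping of constants — namely confirming that the gap between $\tfrac19$ and $\tfrac49$ in the statement is exactly wide enough to cover the $O(1/n)$ perturbation coming from $E_n$ on the lower side and the $O(v_n)$ quadratic remainder on the upper side, which is why the thresholds $\tfrac14$ are chosen as above. Should an explicit value of $N$ be wanted, it suffices to take $N$ so that $9C_0/(2n)<\tfrac14$ and $\tfrac{2C_1}{9}n^{-4/3}<\tfrac14$ for all $n>N$.
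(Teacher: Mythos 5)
Your proposal is correct and follows essentially the same route as the paper: specialize the Lemma to $s=2/3$, absorb the $E_n n^{-7/3}$ error into the gap between $\frac{2}{9}$ and the stated constants for $n$ large, and exponentiate using $e^{u}>1+u$ (lower bound) and a quadratic bound $e^{u}\le 1+u+u^{2}$ (upper bound). The paper only writes out the lower bound explicitly, so your careful bookkeeping for the upper bound is simply the natural completion of the same argument.
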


\begin{proof}
For example for the lower bound we obtain from (\ref{eq:konkav})
\[
2n^{2/3}-\left( n-1\right) ^{2/3}-\left( n+1\right) ^{2/3}
\geq
\frac{2}{9}n^{-4/3}-2
C_{0}
n^{-7/3}>\frac{1}{9}
n^{-4/3}
\]
for some $C_{0}
>0$ and all $n>N$ for some $N
$. Therefore
\begin{eqnarray*}
\exp \left( 2C_{1
}n^{2/3}-C_{1
}\left( n-1\right) ^{2/3}-C_{1
}\left( n+1\right) ^{2/3}\right) &>&\exp \left( \frac{
C_{1
}}{9}n^{-4/3}\right) \\
&>
&1+\frac{
C_{1
}}{9}n^{-4/3}.
\end{eqnarray*}
\end{proof}

\begin{theorem}
Let $C_{1},C_{2},C_{3}>0$,
$
r,
\gamma _{1},
\gamma _{2}\in \mathbb{R}
$,
$\beta \left( n\right) =C_{2
}n^{r}\mathrm{e}^{C_{1
}n^{2/3}}$,
and
\[
\left| \frac{\alpha \left( n\right) }{
\beta \left( n\right) }-1-
\gamma _{1}n^{-2/3}-
\gamma _{2}n^{-4/3}
\right| \leq C_{3}n^{-2}
\]
for $n>N_{0}$ for some $N_{0}$.
There is an $N\geq N_{0}$ such that
$\left( \alpha \left( n\right) \right) _{n\geq N_{0}
}$ is log-concave for
$n>N
$.
\end{theorem}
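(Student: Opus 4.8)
The plan is to pass to logarithms and reduce log-concavity to the positivity, for large $n$, of the second difference
\[
\Lambda(n):=2\log\alpha(n)-\log\alpha(n-1)-\log\alpha(n+1),
\]
since $\alpha(n)^2>\alpha(n-1)\,\alpha(n+1)$ is equivalent to $\exp(\Lambda(n))>1$. First I would note that the hypothesis forces $\alpha(n)/\beta(n)\to 1$ and $\beta(n)>0$, so $\alpha(n)>0$ for $n$ large and the logarithms are legitimate. Writing $\alpha(n)=\beta(n)\,g(n)$ with
\[
g(n)=1+\gamma_1 n^{-2/3}+\gamma_2 n^{-4/3}+\rho(n),\qquad |\rho(n)|\le C_3 n^{-2}\ \ (n>N_0),
\]
I split $\Lambda(n)=\Lambda_\beta(n)+\Lambda_g(n)$ according to $\log\alpha=\log\beta+\log g$.

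For the $\beta$-part, $\log\beta(n)=\log C_2+r\log n+C_1 n^{2/3}$, so
\[
\Lambda_\beta(n)=r\bigl(2\log n-\log(n-1)-\log(n+1)\bigr)+C_1\bigl(2n^{2/3}-(n-1)^{2/3}-(n+1)^{2/3}\bigr).
\]
The first bracket equals $-\log(1-n^{-2})=n^{-2}+O(n^{-4})$, hence contributes $O(n^{-2})$; for the second bracket I apply the identity (\ref{eq:konkav}) with $s=2/3$, which yields $\tfrac29 n^{-4/3}+E_n n^{-7/3}$ with $|E_n|<C_0$. Thus $\Lambda_\beta(n)=\tfrac{2C_1}{9}n^{-4/3}+O(n^{-2})$, and this is the source of the single positive main term, morally the strict concavity of $t\mapsto t^{2/3}$. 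For the $g$-part I expand $\log g(n)=\gamma_1 n^{-2/3}+(\gamma_2-\tfrac12\gamma_1^2)n^{-4/3}+O(n^{-2})$, the error absorbing $\rho(n)$ together with the quadratic and higher terms of $\log(1+\cdot)$. Taking second differences and using (\ref{eq:konkav}) again with $s=-2/3$ and $s=-4/3$, the $n^{-2/3}$ and $n^{-4/3}$ pieces produce only $O(n^{-8/3})$ and $O(n^{-10/3})$, while the second difference of the $O(n^{-2})$ error is bounded crudely by $O(n^{-2})$ using $(n\pm1)^{-2}\le 2n^{-2}$ for $n$ large; hence $\Lambda_g(n)=O(n^{-2})$.

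Combining, $\Lambda(n)=\tfrac{2C_1}{9}n^{-4/3}+O(n^{-2})$. Since $C_1>0$ and $n^{-4/3}$ dominates $n^{-2}$, there is $N\ge N_0$ with $\Lambda(n)>0$ for all $n>N$, which is exactly log-concavity of $(\alpha(n))_{n\ge N_0}$ beyond $N$. The hard part is not any single estimate but the bookkeeping: one must check that \emph{every} correction — from the exponent $r$, from $\gamma_1$ and $\gamma_2$, and from the $C_3 n^{-2}$ error — becomes $o(n^{-4/3})$ after applying the second-difference operator, so that the positive contribution $\tfrac{2C_1}{9}n^{-4/3}$ survives. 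I expect no genuine obstacle beyond carrying out these expansions carefully (and, if one wanted an explicit $N$, tracking all implied constants, which the preceding Lemma and Corollary already make available).
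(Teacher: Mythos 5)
Your proof is correct and follows essentially the same route as the paper's: both isolate the single positive main term of size $\tfrac{2C_1}{9}n^{-4/3}$ coming from the second difference of $C_1 n^{2/3}$ via Lemma~(\ref{eq:konkav}), and both verify that every remaining contribution --- from $r$, from $\gamma_1,\gamma_2$, and from the $C_3 n^{-2}$ error --- is $O(n^{-2})=o(n^{-4/3})$ after taking second differences. Passing to logarithms is only an organizational repackaging of the paper's multiplicative sandwich $(1+f_-(n))\beta(n)<\alpha(n)<(1+f_+(n))\beta(n)$ together with the exponential estimate~(\ref{eq:ungleichung}).
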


\begin{proof}
Let $f_{\pm }\left( n\right) =
\gamma_{1}n^{-2/3}+
\gamma _{2}n^{-4/3}\pm C_{3}n^{-2}$. Then
there is an $N_{1}\geq N_{0}$ such that
\[
\left(
1+f_{-}\left(
n\right) 
\right) \beta \left( n\right) <\alpha \left( n\right) <\left(
1+f_{+}\left(
n\right)
\right) \beta \left( n\right)
\]
for all $n>N_{1}$. Therefore
\begin{eqnarray*}
&&\left( \alpha \left( n\right) \right) ^{2}-\alpha \left( n-1\right) \alpha \left( n+1\right) \\
&>&\left( \left(
1+f_{-}\left(
n\right) \right)
\beta \left( n\right) \right) ^{2}-\left(
1+f_{+}
\left( n-1\right)
\right)
\left(
1+f_{+}
\left( n+1\right)
\right)
\beta \left( n-1\right) \beta \left( n+1\right) \\
&=&\left(
1+f_{-}\left(
n
\right)
\right) ^{2}
\beta \left( n+1\right)
\beta \left( n-1\right)
\cdot {}\\
&&{}\cdot
( \exp \left( 2Bn^{2/3}-B\left( n-1\right) ^{2/3
}-B\left( n+1\right) ^{2/3
}\right) -{}\\
&&{}-
\frac{\left(
1+f_{+}\left( n-1
\right)
\right) \left(
1+f_{+}\left( n+1\right)
\right)
}{
\left(
1+f_{-}\left(
n\right)
\right) ^{2}
}\left( \frac{n^{2}-1}{n^{2}}\right) ^{
r}
)
.
\end{eqnarray*}
Now
\begin{eqnarray*}
&&\frac{\left( 1+f_{+}\left( n-1\right) \right) \left( 1+f_{+}\left( n+1\right) \right) }{\left( 1+f_{-}\left( n\right) \right) ^{2}}\\
&=&1+\frac{f_{+}\left( n-1\right) +f_{+}\left( n+1\right) -2f_{-}\left( n\right) }{1+f_{-}\left( n\right) }+{}\\
&&{}+\frac{\left( f_{+}\left( n-1\right) -f_{-}\left( n\right) \right) \left( f_{+}\left( n+1\right) -f_{-}\left( n\right) \right) }{\left( 1+f_{-}\left( n\right) \right) ^{2}}.
\end{eqnarray*}
With (\ref{eq:konkav}) we obtain
\[
\left| f_{+}\left( n-1\right) +f_{+}\left( n+1\right) -2f_{-}\left( n\right) \right| =\left| -\frac{10}{9}
\gamma _{1}n^{-8/3}
+E_{n}n^{-10/3}\right| <C
_{5
}n^{-8/3}
\]
with $\left| E_{n}\right| <C_{4}$, $C_{4}
,C_{5}>0$, and
\[
\left| \left( n+v\right) ^{u}-n^{u}\right| =\left|
1+\frac{uv}{n}
+\frac{
U_{n}}{n^{2}}-1\right| n^{u}=\left| uvn^{u-1}+
U_{n}n^{u-2}\right| <C_{6}n^{-5/3
}
\]
with $\left| U_{n}\right| <C_{
7
}$, $C_{
6},C_{7
}>0$,
and $u\leq -2/3
$.
Therefore
\[
\left| \frac{\left( 1+f_{+}\left( n-1\right) \right) \left( 1+f_{+}\left( n+1\right) \right) }{\left( 1+f_{-}\left( n\right) \right) ^{2}}\right| <1+C_{8}n^{-8/3
}
\]
for some $C_{8}>
C_{5}$
which implies with (\ref{eq:ungleichung})
\begin{eqnarray*}
&&\exp \left( 2C_{1
}n^{2/3}-C_{1
}\left( n-1\right) ^{2/3}-C_{1
}\left( n+1\right) ^{2/3}\right) -{}\\
&&{}
-
\frac{\left( 1+
f_{+}\left( n-1\right) \right) \left( 1+f_{+}\left( n+1\right) \right)
}{\left(
1+f_{-}\left(
n\right) \right) ^{2}}
\left(
1+\frac{1}{n^{2}-1}\right) ^{-r}\\
&>&1+
\frac{
C_{1
}}{9}n^{-4/3}
-
( 1+C_{8}n^{-8/3
}
) \left( 1+\frac{
C_{9}}{n^{2}
}\right) >0
\end{eqnarray*}
for some $C_{9}>0
$. This is positive
for $n>N$ and $N$ sufficiently large. 
\end{proof}

\begin{proof}[Proof of Theorem~\ref{asymptotic}]
Wright's formula (\cite{Wr31}, Formula~(2.21))
tells us that there are $C_{1},C_{2},C_{3}>0$,
$r=-25/36$, and
$\gamma _{1},
\gamma _{2}\in \mathbb{R}$
such
that
\[
\func{pp}\left( n\right) =
C_{2
}
n
^{-\frac{25}{36}} \mathrm{e}
^{C_{1
}
{n}
^{
{2}/{3}}}
\left( 1+\frac{\gamma _{1}}{n^{\frac{2}{3}}}+\frac{\gamma _{2}}{n^{\frac{4}{3}}}+\frac{G
_{n}
}{n^{
2}
}\right)
\]
with $\left| G
_{n}\right| <C_{3}$ for all $n>N_{0}$, $N_{0}$ sufficiently large.
\end{proof}
\section{Conjecture \ref{C1} and Conjecture \ref{C2}}
In this section we provide evidence for the Conjectures by 
information on the zeros of the underlying polynomials. Here we also use the crucial property that
the leading coefficient of these polynomials always has positive sign.
\subsection{Conjecture \ref{C1}}
Figure \ref{ThePlot} indicates that it is most likely that if one considers the sequence of real parts of
the zero of $\{\Delta_{a+1,a-1}(x)\}$ with the
largest real part, that these numbers tend in the limit to zero.
Another aspect is given by coefficients of these polynomials (Table \ref{examples}).
\begin{table}[H]
\[
\begin{array}{rl}
\hline
a&\Delta _{a+1,a-1}\left( x\right) \\ \hline \hline
2&\frac{1}{12}\*x^4
 + \frac{35}{12}\*x^2
\\
3&\frac{1}{144}\*x^6
 + \frac{5}{48}\*x^5
 + \frac{145}{144}\*x^4
 - \frac{101}{48}\*x^3
 - \frac{145}{72}\*x^2
\\
4&\frac{1}{2880}\*x^8
 + \frac{1}{72}\*x^7
 + \frac{67}{288}\*x^6
 + \frac{137}{144}\*x^5
 + \frac{11629}{2880}\*x^4
 + \frac{1373}{144}\*x^3
 + \frac{491}{48}\*x^2
\\
5&\frac{1}{86400}\*x^{10}
 + \frac{1}{1152}\*x^9
 + \frac{5}{192}\*x^8
 + \frac{941}{2880}\*x^7
 + \frac{63131}{28800}\*x^6
 + \frac{6449}{1152}\*x^5
 - \frac{1303}{864}\*x^4
 - \frac{54613}{1440}\*x^3
 - \frac{1671}{100}\*x^2
\\ \hline
\end{array}
\]
\caption{\label{examples}Polynomials $\Delta _{a+1,a-1}\left( x\right) $ for $a\in \left\{ 2,3,4,5\right\} $.}
\end{table}
We observe that for $a$ odd there are coefficients, which are negative.
In the case $a$ even, as already mentioned in the introduction, we have calculated
the polynomials for $2 \leq a \leq
1000$ and
observed that the coefficients are all non-negative. Let 
\[
\Delta_{a+1,a-1}\left( x\right) = \sum_{k=2}^{2a}
B_{2a,k}\, x^k,
\]
then we deduce from Proposition \ref{lc:quotient} that $B_{2a,2}>0$.
\newline
\newline
\newline
\
\subsection{Conjecture \ref{C2}}
Theorem \ref{th: bo2} implies that $\Delta_{a,0}(x)= P_{a-1}(x) \, P_1(x) - P_a(x) >0$ for
$a \geq 12$ and $x \geq 2$. Since $\Delta_{a,0}(x)> 0$ also for $3 \leq a \leq 11$ (Table \ref{REALNS}), we obtain:

\begin{corollary}
Let $x \geq 2$. Then Conjecture \ref{C2} is valid for all $a \geq 3$ and $b=0$. 
\end{corollary}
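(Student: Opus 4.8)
The plan is to derive the statement directly from Theorem~\ref{th: bo2}, so that the proof is essentially a one-line corollary followed by a finite base-case verification. First I would specialize the polynomial $\Delta_{a,b}(x) = P_{a-1}(x)\,P_{b+1}(x) - P_a(x)\,P_b(x)$ to $b = 0$. Since the recurrence~(\ref{xeasy}) gives $P_0(x) = 1$ and $P_1(x) = x$, this yields
\[
\Delta_{a,0}(x) = x\,P_{a-1}(x) - P_a(x) = P_{a-1}(x)\,P_1(x) - P_{(a-1)+1}(x),
\]
so that $\Delta_{a,0}(x) > 0$ is precisely the Bessenrodt--Ono inequality for the pair $(a-1,1)$.

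Next I would invoke Theorem~\ref{th: bo2} with this pair: for every real $x \ge 2$ and every integer $a$ with $(a-1)+1 = a \ge 12$ it gives $P_{a-1}(x)\,P_1(x) > P_a(x)$, i.e.\ $\Delta_{a,0}(x) > 0$. This covers the range $a \ge 12$, $x \ge 2$.

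It remains to treat the finitely many cases $3 \le a \le 11$. Here I would note that $\Delta_{a,0}(x) = x\,P_{a-1}(x) - P_a(x)$ is a polynomial of degree $a$ whose leading coefficient is $\tfrac{1}{(a-1)!} - \tfrac{1}{a!} = \tfrac{a-1}{a!} > 0$, hence $\Delta_{a,0}(x) \to +\infty$ as $x \to \infty$; therefore it suffices to check that the largest real zero of $\Delta_{a,0}$ lies strictly below $2$, which is exactly the data recorded in Table~\ref{REALNS} for the pairs $(a,0)$ with $3 \le a \le 11$ (equivalently, a short PARI/GP computation confirms $\Delta_{a,0}(2) > 0$ and the absence of real zeros in $[2,\infty)$). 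Combining the two ranges gives $\Delta_{a,0}(x) > 0$ for all $a \ge 3$ and all $x \ge 2$, which is the Corollary.

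I do not expect any genuine obstacle here: the substance is entirely contained in Theorem~\ref{th: bo2}, and the small-$a$ cases are a routine numerical check. The one point worth flagging is that the lower bound $a \ge 3$ is sharp: from~(\ref{xeasy}) one computes $\Delta_{2,0}(x) = P_1(x)^2 - P_2(x) = x^2 - \bigl(\tfrac12 x^2 + \tfrac52 x\bigr) = \tfrac12 x(x-5)$, so $\Delta_{2,0}(2) = -3 < 0$ and the case $a = 2$, $b = 0$ genuinely fails at $x = 2$; thus the corollary cannot be extended downward.
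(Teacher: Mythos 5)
Your reduction of the range $a\ge 12$ to Theorem~\ref{th: bo2} applied to the pair $(a-1,1)$ is exactly the paper's argument and is fine: $\Delta_{a,0}(x)=P_{a-1}(x)P_1(x)-P_a(x)$ and $(a-1)+1=a\ge 12$ places you inside that theorem's hypotheses. The problem is your treatment of the finite range $3\le a\le 11$. Table~\ref{REALNS} does not record largest real zeros of $\Delta_{a,0}$ below $2$ for these $a$; it records the values $3.16,\,3.00,\,2.57,\,2.50,\,2.30,\,2.25,\,2.14,\,2.09,\,2.03$, all of them at least $2$. A direct computation confirms this: $\Delta_{3,0}(x)=P_2(x)P_1(x)-P_3(x)=\tfrac{1}{3}x\left(x^{2}-10\right)$, so $\Delta_{3,0}(2)=-4<0$, and $\Delta_{4,0}(x)=\tfrac{1}{8}x^{4}+\tfrac{5}{4}x^{3}-\tfrac{25}{8}x^{2}-\tfrac{21}{4}x$ vanishes at $x=3$ and has $\Delta_{4,0}(2)=-11<0$. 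So the PARI/GP check you propose would refute, not confirm, positivity of $\Delta_{a,0}$ on $[2,\infty)$ for $3\le a\le 11$; what your argument actually establishes is the corollary for $a\ge 12$, plus positivity for smaller $a$ only once $x$ exceeds the tabulated zeros (all below $3.17$).

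To be fair, this defective step mirrors the paper's own proof, which likewise asserts $\Delta_{a,0}(x)>0$ for $3\le a\le 11$ by citing Table~\ref{REALNS}, in contradiction with the table's own entries and with the computations above; so you have reproduced the paper's route, base case included, but the verification for $3\le a\le 11$, $x\ge 2$ genuinely fails, and the statement in that range is false at, e.g., $(a,x)=(3,2)$. Your sharpness remark about $a=2$ is correct ($\Delta_{2,0}(x)=\tfrac{1}{2}x(x-5)$, matching the entry $5.00$), but the same phenomenon persists for every $a$ up to $11$, so the cutoff $a\ge 3$ cannot be salvaged by a numerical check at $x=2$; one would either have to restrict to $a\ge 12$ or raise the lower bound on $x$ above the largest tabulated zero.
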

Next, we prove Conjecture \ref{C2} for $x \geq 1$ and all pairs $(a,1)$ with $a \geq 3$.
We refer also to Figure 
\ref{plot 1und2} for the pairs $(a,b)$ with $2\leq b \leq 4$. Note, for $(a,1)$ and $3 \leq a \leq 100$, there are no zeros
with a positive real part. 

\begin{minipage}{1.05\textwidth}
\begin{center}
\includegraphics[width=1.0\textwidth]{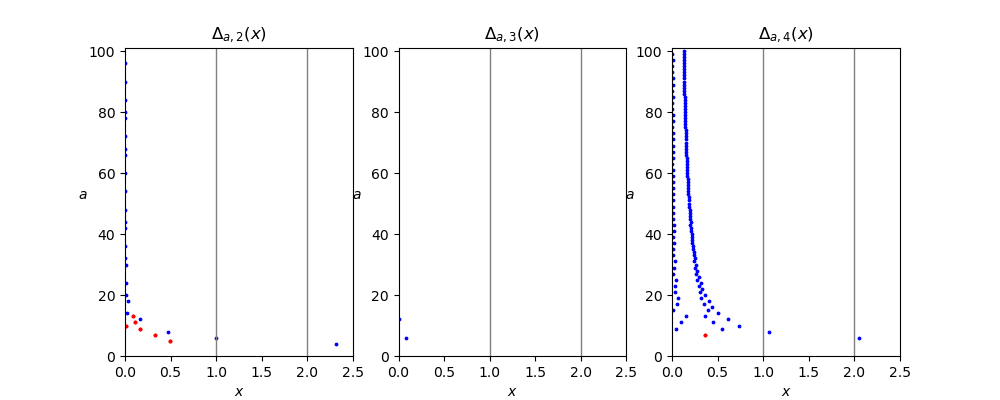}
\captionsetup{margin={0cm,0cm,0cm,0cm}}
\captionof{figure}{Zeros with the largest real part of $\Delta_{a,b}(x)$ for $ 2 \leq b \leq 4$,  \\
blue = real
zero, red = imaginary
zero.} \label{plot 1und2}
\end{center}
\end{minipage}
\begin{proposition}
Let $x \geq 1$ and $a \geq 3$. Then $\Delta_{a,1}(x)>0$.
\end{proposition}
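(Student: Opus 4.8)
The plan is to verify the inequality $\Delta_{a,1}(x) = P_{a-1}(x)\,P_2(x) - P_a(x)\,P_1(x) > 0$ for $x \ge 1$ and $a \ge 3$ directly, exploiting that $P_1(x) = x$ and $P_2(x) = \tfrac12 x^2 + \tfrac52 x$ are completely explicit, so that $\Delta_{a,1}(x) = \bigl(\tfrac12 x^2 + \tfrac52 x\bigr)P_{a-1}(x) - x\,P_a(x)$. Using the recurrence $(\ref{xeasy})$ in the form $a\,P_a(x) = x\sum_{k=1}^{a}\sigma_2(k)P_{a-k}(x)$, I would rewrite $x\,P_a(x) = \tfrac{x^2}{a}\sum_{k=1}^a \sigma_2(k)P_{a-k}(x)$ and similarly expand $P_{a-1}(x)$ one step if needed, so that $\Delta_{a,1}(x)$ becomes an explicit linear combination of products $x^2 P_{a-1-j}(x)$ and $x\,P_{a-1}(x)$ with controllable coefficients. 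The term $\tfrac52 x\,P_{a-1}(x)$ is manifestly positive, so the whole game is to show the remaining part, $\tfrac12 x^2 P_{a-1}(x) - x\,P_a(x)$, is either positive for $a$ large or only mildly negative, dominated by $\tfrac52 x\,P_{a-1}(x)$.

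Concretely, I would isolate the $k=1$ term of $x\,P_a(x) = \tfrac{x^2}{a}\sigma_2(1)P_{a-1}(x) + \tfrac{x^2}{a}\sum_{k=2}^a \sigma_2(k)P_{a-k}(x)$. Since $\sigma_2(1)=1$, this gives
\[
\tfrac12 x^2 P_{a-1}(x) - x\,P_a(x) = \Bigl(\tfrac12 - \tfrac1a\Bigr) x^2 P_{a-1}(x) - \tfrac{x^2}{a}\sum_{k=2}^a \sigma_2(k) P_{a-k}(x).
\]
For $a \ge 3$ the coefficient $\tfrac12 - \tfrac1a \ge \tfrac16 > 0$. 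Now I would bound the subtracted sum using the monotonicity $(\ref{HNT Theorem})$: $P_{a-k}(x) \le P_{a-2}(x)$ for $k \ge 2$, so $\sum_{k=2}^a \sigma_2(k)P_{a-k}(x) \le P_{a-2}(x)\sum_{k=2}^a \sigma_2(k)$. This is too crude on its own because $\sum_{k=2}^a\sigma_2(k)$ grows like $a^3$, while $P_{a-1}(x)/P_{a-2}(x)$ need not grow that fast for moderate $x$; so the clean estimate will likely only close for $a$ beyond some threshold, and the small cases will have to be checked by the lower bound $(\ref{estimation})$ together with the $\tfrac52 x\,P_{a-1}(x)$ reserve. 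That is: combine $\Delta_{a,1}(x) \ge \tfrac52 x\,P_{a-1}(x) + \bigl(\tfrac12-\tfrac1a\bigr)x^2 P_{a-1}(x) - \tfrac{x^2}{a}P_{a-2}(x)\sum_{k=2}^a\sigma_2(k)$, use $\sum_{k=2}^a\sigma_2(k) < \tfrac23 a^3$ (or a sharper explicit bound), bound $P_{a-2}(x)$ above crudely and $P_{a-1}(x)$ below by $(\ref{estimation})$ with $m$ chosen large enough that the resulting polynomial in $a$ has positive leading coefficient, then compute the largest real zero to get an explicit threshold $a_0$; finally verify $\Delta_{a,1}(x) > 0$ for $3 \le a < a_0$ and $x \ge 1$ by checking that these finitely many polynomials have no real zero $\ge 1$ (their leading coefficients are positive, so it suffices to locate the largest real zero, cf. the strategy of Section~\ref{Strategy} and Table~\ref{REALNS}).

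The main obstacle I anticipate is getting a usable two-sided control of $P_{a-2}(x)$ versus $P_{a-1}(x)$ that is uniform in $x \ge 1$: the lower bound $(\ref{estimation})$ is strong for large $x$ but weak near $x=1$, whereas near $x=1$ we are essentially back to the numerics $\func{pp}(a-2)$ vs. $\func{pp}(a-1)$, where Lemma~\ref{Stufe} ($\func{pp}(n+1) < 3\func{pp}(n)$) gives exactly the kind of ratio bound needed. So the cleanest route may be to prove an analogue of Lemma~\ref{Stufe} at the polynomial level — namely $P_{a}(x) < c(x)\,P_{a-1}(x)$ with an explicit $c(x)$ for $x \ge 1$ — or simply to split into the regime $x \ge x_1$ (use $(\ref{estimation})$) and $1 \le x \le x_1$ (use the polynomial-Stufe bound), each of which yields a finite check. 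Either way the structure is: one explicit polynomial-in-$a$ inequality valid for $a \ge a_0$, plus a finite PARI/GP verification for $3 \le a < a_0$.
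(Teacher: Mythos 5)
Your reduction via (\ref{xeasy}) is algebraically correct, but the central estimate cannot be rescued by choosing a threshold $a_0$. After isolating the $k=1$ term, the bound $P_{a-k}(x)\le P_{a-2}(x)$ from (\ref{HNT Theorem}) forces you to prove, after dividing by $x^{2}P_{a-2}(x)$, an inequality of the shape $\left(\frac{5}{2x}+\frac12-\frac1a\right)\frac{P_{a-1}(x)}{P_{a-2}(x)}>\frac1a\sum_{k=2}^{a}\sigma_2(k)$, and the right-hand side is of order $a^{2}$ (about $\frac{\zeta(3)}{3}a^{2}$), while for $x\ge1$ the prefactor is at most $3$. So you need $P_{a-1}(x)\gtrsim a^{2}P_{a-2}(x)$, which fails for every fixed $x$ once $a$ is large: at $x=1$ Lemma~\ref{Stufe} gives $\func{pp}(a-1)<3\func{pp}(a-2)$, and for any fixed $x$ the ratio cannot exceed $c\,a^{2}$ for all large $a$, since then $\sum_n P_n(x)q^n$ could not converge for $|q|<1$; roughly, the ratio only reaches size $a^{2}$ when $x$ itself is of order $a^{3}$. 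Hence the failure set is not a finite list of $a$'s that PARI/GP can absorb, and the same obstruction persists in your ``$x\ge x_1$'' regime for any fixed $x_1$. The loss is structural: replacing the rapidly decreasing terms $P_{a-k}(x)$ by $P_{a-2}(x)$ inflates a sum whose true value, by (\ref{xeasy}), is $\frac{a}{x}P_a(x)-P_{a-1}(x)$ --- exactly the quantity at stake --- by a factor of order $a^{2}$. (This is also why the analogous termwise strategy does work for Theorem~\ref{th:x}, where one only needs $P_{a-1}(x)$ to dominate a fixed polynomial in $a$, but not here, where a ratio of consecutive polynomials is involved.)

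Your proposed repair, a polynomial analogue of Lemma~\ref{Stufe} of the form $P_a(x)<c(x)P_{a-1}(x)$ on the moderate range, is circular as stated: since $P_2(x)/P_1(x)=\frac{x+5}{2}$, the assertion $\Delta_{a,1}(x)>0$ is literally $P_a(x)<\frac{x+5}{2}P_{a-1}(x)$, so any admissible $c(x)\le\frac{x+5}{2}$ is the proposition itself, and you offer no mechanism to prove it. The paper supplies precisely this missing mechanism: write $\Delta_{a,1}(x)=x\,F_a(x)$ with $F_a(x)=\frac{x+5}{2}P_{a-1}(x)-P_a(x)$, anchor at $x=1$ through Lemma~\ref{Stufe} (so $F_a(1)=3\func{pp}(a-1)-\func{pp}(a)>0$), and then prove $F_a'(x)>0$ for $x\ge1$ by induction on $a$: the derivative formula (\ref{derivative}) expresses $F_a'(x)$ through the sums $\sum_k\frac{\sigma_2(k)}{k}P_{a-1-k}(x)$ and $\sum_k\frac{\sigma_2(k)}{k}P_{a-k}(x)$, and the induction hypothesis $F_m(x)>0$ for $m<a$ lets these cancel termwise up to a remainder controlled by $\sigma_2(a-1)/(a-1)$, $\sigma_2(a)/a$ and $P_{a-1}(x)$; the finitely many small $a$ are read off Table~\ref{REALNS}. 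Without some device of this kind --- reducing to $x=1$ and exploiting monotonicity in $x$, rather than discarding the weights $\sigma_2(k)$ --- your approach does not close.
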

\begin{proof}
We deduce from Table \ref{REALNS} that $\Delta_{a,1}(x)>0$ for 
$x >0$ and $a\leq 20$.
Moreover, $\Delta_{a,1}(1) = 3 \, \func{pp}(a-1)-\func{pp}(a) >0$ for $ a \geq 3$.
This follows from Lemma \ref{Stufe}.
Let $\Delta_{a,1}(x) = x \, F_a(x)$. We prove that $F_a(x)>0$ for $x \geq 1$ and $a\geq 3$ by induction on $a$.
Let $a \geq 6$ and $F_m(x) > 0$ for $3 \leq m < a$ and $x \geq 1$. We show that $F_a'(x)>0$, which completes the proof, 
since $F_a(1)> 1$, for $
a \geq 3
$. Recall Formula (\ref{derivative}). 
Then the derivative $F_a'(x)$ is equal to
\begin{equation*}
\frac{x+5}{2} \,
\sum_{k=1}^{a-1} \frac{\sigma_2(k)}{k} P_{a-1-k}(x) \,  - 
\sum_{k=1}^{a} \frac{\sigma_2(k)}{k} P_{a-k}(x)  + \frac{P_{a-1}(x)}{2}.
\end{equation*}
By the induction hypothesis we obtain
\begin{equation*} \frac{x+5}{2} 
\sum_{k=1}^{a-1} \frac{\sigma_2(k)}{k} P_{a-1-k}(x) \, > \, \frac{x+5}{2}
\frac{\sigma_2(a-1)}{a-1}  + 
\sum_{k=1}^{a-2} \frac{\sigma_2(k)}{k} P_{a-k}(x).
\end{equation*}
This leads to
\begin{eqnarray*}
F_{a}^{\prime }\left( x\right)  & > & \frac{x+5}{2}
\frac{\sigma_2(a-1)}{a-1}  - \sum_{k=a-1}^{a} \frac{\sigma_2(k)}{k} P_{a-k}(x)  + \frac{P_{a-1}(x)}{2}   \\
& = &
\frac{P_{a-1}(x)}{2} - \frac{x \, \sigma_2(a-1)}{2(a-1)}   +  \frac{5 \, \sigma_2(a-1)}{2\, (a-1)} - 
\frac{  \sigma_2(a)}{a}.
\end{eqnarray*}
Moreover, we have
\begin{equation*}
2 \, F_a'(x) > \left( P_{a-1}(x) - \frac{\sigma_2(a-1)}{a-1} \, x \right) + a-5,
\end{equation*}
since $a^2 < \sigma_2(a) < 2 \, a^2$. We observe that 
$P_{a}(x)- \frac{\sigma_2(a)}{a}\, x$ has
non-negative coefficients. Finally, this implies that $F_a'(x)>0$ for all $x>1$.
Thus, $F_a(x)>0$ since $F_a(1)>0$.
\end{proof}
\begin{corollary}
Let $x \geq 1$ and $a \geq 3$. Then $\Delta_{a,1}'(x)>0$.
\end{corollary}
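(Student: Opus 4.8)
The plan is to deduce this from what has just been proved for the Proposition, since $\Delta_{a,1}'(x) = \big(xF_a(x)\big)' = F_a(x) + xF_a'(x)$ with the same $F_a$. It therefore suffices to check, for $x \geq 1$, that $F_a(x) > 0$ and $xF_a'(x) \geq 0$; as $x \geq 1 > 0$, the second reduces to $F_a'(x) \geq 0$.

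First I would dispose of $a \geq 6$. The Proposition gives $F_a(x) > 0$ for $x \geq 1$, and its proof produced the bound $2F_a'(x) > \big(P_{a-1}(x) - \frac{\sigma_2(a-1)}{a-1}\,x\big) + (a-5)$ together with the observation that $P_{a-1}(x) - \frac{\sigma_2(a-1)}{a-1}\,x$ has non-negative coefficients; hence the right-hand side is non-negative for every $x \geq 0$ once $a \geq 6$, so $F_a'(x) > 0$ there. (Equivalently, one may quote directly the closing line of that proof, $F_a'(x) > 0$ for $x > 1$, and extend it to $x = 1$ by continuity of the polynomial $F_a'$.) Combining, $\Delta_{a,1}'(x) = F_a(x) + xF_a'(x) > 0$ for all $x \geq 1$ when $a \geq 6$.

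It then remains to settle the three cases $a \in \{3,4,5\}$, which I would do by direct expansion from the recurrence (\ref{xeasy}) or Table \ref{pnx}: one finds, for instance, $\Delta_{3,1}(x) = \frac{1}{12}x^4 + \frac{35}{12}x^2$ (the $a=2$ entry of Table~\ref{examples}) and $\Delta_{4,1}(x) = \frac{1}{24}x^5 + \frac{5}{12}x^4 + \frac{35}{24}x^3 + \frac{37}{12}x^2$, and similarly a polynomial with all coefficients positive for $a=5$; differentiating, $\Delta_{a,1}'(x)$ again has only positive coefficients and so is positive for every $x > 0$, in particular for $x \geq 1$. No genuine obstacle arises here: the statement is a direct corollary of the derivative estimate embedded in the previous proof, the only minor points of care being the endpoint $x = 1$ for $a \geq 6$ (handled by continuity, or by noting the cited bound is valid at $x=1$ as well) and the finitely many small values $a = 3,4,5$ (handled by explicit computation).
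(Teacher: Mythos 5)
Your proposal is correct and follows essentially the argument the paper intends: the corollary is read off from the proof of the preceding Proposition via $\Delta_{a,1}'(x)=F_a(x)+xF_a'(x)$, with $F_a(x)>0$ from the Proposition and $F_a'(x)>0$ (at least $\geq 0$ at $x=1$) from the derivative bound $2F_a'(x)>\bigl(P_{a-1}(x)-\frac{\sigma_2(a-1)}{a-1}x\bigr)+a-5$ established there for $a\geq 6$. Your explicit verification of $a=3,4,5$ (where the paper's induction does not supply $F_a'>0$ and only the zero table is invoked) is a sensible and indeed necessary completion of the small cases, and your computed polynomials check out.
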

\begin{remarks} \ \\
a) The real part of the zeros of $\Delta_{a,1}(x)/ \, x $ is negative for $3 \leq a \leq 100$.\\
b) The method of the proof is similar to the one outlined in \cite{HN21A}.
\end{remarks}
\begin{table}
\resizebox{\columnwidth}{!}{
\begin{tabular}{|r|ccccccccccccccccccc}
\cline{1 - 2}
   2 & \multicolumn{1}{c|}{\textbf{5.00}} & \hphantom{0.00}  & \hphantom{0.00}  & \hphantom{0.00}  & \hphantom{0.00}  & \hphantom{0.00}  & \hphantom{0.00}  & \hphantom{0.00}  & \hphantom{0.00}  & \hphantom{0.00}  & \hphantom{0.00}  & \hphantom{0.00}  & \hphantom{0.00}  & \hphantom{0.00}  & \hphantom{0.00}  & \hphantom{0.00}  & \hphantom{0.00}  & \hphantom{0.00}  & \hphantom{0.00}  \\
\cline{3 - 3}   3 & \textbf{3.16} & \multicolumn{1}{c|}{\hphantom{-}--\hphantom{-} } & \hphantom{0.00}  & \hphantom{0.00}  & \hphantom{0.00}  & \hphantom{0.00}  & \hphantom{0.00}  & \hphantom{0.00}  & \hphantom{0.00}  & \hphantom{0.00}  & \hphantom{0.00}  & \hphantom{0.00}  & \hphantom{0.00}  & \hphantom{0.00}  & \hphantom{0.00}  & \hphantom{0.00}  & \hphantom{0.00}  & \hphantom{0.00}  & \hphantom{0.00}  \\
\cline{4 - 4}   4 & \textbf{3.00} & \hphantom{-}--\hphantom{-}  & \multicolumn{1}{c|}{\textbf{2.31}} & \hphantom{0.00}  & \hphantom{0.00}  & \hphantom{0.00}  & \hphantom{0.00}  & \hphantom{0.00}  & \hphantom{0.00}  & \hphantom{0.00}  & \hphantom{0.00}  & \hphantom{0.00}  & \hphantom{0.00}  & \hphantom{0.00}  & \hphantom{0.00}  & \hphantom{0.00}  & \hphantom{0.00}  & \hphantom{0.00}  & \hphantom{0.00}  \\
\cline{5 - 5}   5 & \textbf{2.57} & \hphantom{-}--\hphantom{-}  & \hphantom{-}--\hphantom{-}  & \multicolumn{1}{c|}{\hphantom{-}--\hphantom{-} } & \hphantom{0.00}  & \hphantom{0.00}  & \hphantom{0.00}  & \hphantom{0.00}  & \hphantom{0.00}  & \hphantom{0.00}  & \hphantom{0.00}  & \hphantom{0.00}  & \hphantom{0.00}  & \hphantom{0.00}  & \hphantom{0.00}  & \hphantom{0.00}  & \hphantom{0.00}  & \hphantom{0.00}  & \hphantom{0.00}  \\
\cline{6 - 6}   6 & \textbf{2.50} & \hphantom{-}--\hphantom{-}  & 1.00 & 0.09 & \multicolumn{1}{c|}{\textbf{2.05}} & \hphantom{0.00}  & \hphantom{0.00}  & \hphantom{0.00}  & \hphantom{0.00}  & \hphantom{0.00}  & \hphantom{0.00}  & \hphantom{0.00}  & \hphantom{0.00}  & \hphantom{0.00}  & \hphantom{0.00}  & \hphantom{0.00}  & \hphantom{0.00}  & \hphantom{0.00}  & \hphantom{0.00}  \\
\cline{7 - 7}   7 & \textbf{2.30} & \hphantom{-}--\hphantom{-}  & \hphantom{-}--\hphantom{-}  & \hphantom{-}--\hphantom{-}  & \hphantom{-}--\hphantom{-}  & \multicolumn{1}{c|}{\hphantom{-}--\hphantom{-} } & \hphantom{0.00}  & \hphantom{0.00}  & \hphantom{0.00}  & \hphantom{0.00}  & \hphantom{0.00}  & \hphantom{0.00}  & \hphantom{0.00}  & \hphantom{0.00}  & \hphantom{0.00}  & \hphantom{0.00}  & \hphantom{0.00}  & \hphantom{0.00}  & \hphantom{0.00}  \\
\cline{8 - 8}   8 & \textbf{2.25} & \hphantom{-}--\hphantom{-}  & 0.47 & \hphantom{-}--\hphantom{-}  & 1.06 & \hphantom{-}--\hphantom{-}  & \multicolumn{1}{c|}{1.64} & \hphantom{0.00}  & \hphantom{0.00}  & \hphantom{0.00}  & \hphantom{0.00}  & \hphantom{0.00}  & \hphantom{0.00}  & \hphantom{0.00}  & \hphantom{0.00}  & \hphantom{0.00}  & \hphantom{0.00}  & \hphantom{0.00}  & \hphantom{0.00}  \\
\cline{9 - 9}   9 & \textbf{2.14} & \hphantom{-}--\hphantom{-}  & \hphantom{-}--\hphantom{-}  & \hphantom{-}--\hphantom{-}  & 0.55 & \hphantom{-}--\hphantom{-}  & 0.73 & \multicolumn{1}{c|}{\hphantom{-}--\hphantom{-} } & \hphantom{0.00}  & \hphantom{0.00}  & \hphantom{0.00}  & \hphantom{0.00}  & \hphantom{0.00}  & \hphantom{0.00}  & \hphantom{0.00}  & \hphantom{0.00}  & \hphantom{0.00}  & \hphantom{0.00}  & \hphantom{0.00}  \\
\cline{10 - 10}  10 & \textbf{2.09} & \hphantom{-}--\hphantom{-}  & \hphantom{-}--\hphantom{-}  & \hphantom{-}--\hphantom{-}  & 0.73 & \hphantom{-}--\hphantom{-}  & 0.89 & \hphantom{-}--\hphantom{-}  & \multicolumn{1}{c|}{1.17} & \hphantom{0.00}  & \hphantom{0.00}  & \hphantom{0.00}  & \hphantom{0.00}  & \hphantom{0.00}  & \hphantom{0.00}  & \hphantom{0.00}  & \hphantom{0.00}  & \hphantom{0.00}  & \hphantom{0.00}  \\
\cline{11 - 11}  11 & \textbf{2.03} & \hphantom{-}--\hphantom{-}  & \hphantom{-}--\hphantom{-}  & \hphantom{-}--\hphantom{-}  & 0.45 & \hphantom{-}--\hphantom{-}  & 0.55 & \hphantom{-}--\hphantom{-}  & \hphantom{-}--\hphantom{-}  & \multicolumn{1}{c|}{\hphantom{-}--\hphantom{-} } & \hphantom{0.00}  & \hphantom{0.00}  & \hphantom{0.00}  & \hphantom{0.00}  & \hphantom{0.00}  & \hphantom{0.00}  & \hphantom{0.00}  & \hphantom{0.00}  & \hphantom{0.00}  \\
\cline{12 - 12}  12 & 1.99 & \hphantom{-}--\hphantom{-}  & 0.16 & 0.00 & 0.62 & \hphantom{-}--\hphantom{-}  & 0.71 & 0.04 & 0.69 & 0.28 & \multicolumn{1}{c|}{1.07} & \hphantom{0.00}  & \hphantom{0.00}  & \hphantom{0.00}  & \hphantom{0.00}  & \hphantom{0.00}  & \hphantom{0.00}  & \hphantom{0.00}  & \hphantom{0.00}  \\
\cline{13 - 13}  13 & 1.94 & \hphantom{-}--\hphantom{-}  & \hphantom{-}--\hphantom{-}  & \hphantom{-}--\hphantom{-}  & 0.36 & \hphantom{-}--\hphantom{-}  & 0.46 & \hphantom{-}--\hphantom{-}  & \hphantom{-}--\hphantom{-}  & \hphantom{-}--\hphantom{-}  & \hphantom{-}--\hphantom{-}  & \multicolumn{1}{c|}{\hphantom{-}--\hphantom{-} } & \hphantom{0.00}  & \hphantom{0.00}  & \hphantom{0.00}  & \hphantom{0.00}  & \hphantom{0.00}  & \hphantom{0.00}  & \hphantom{0.00}  \\
\cline{14 - 14}  14 & 1.91 & \hphantom{-}--\hphantom{-}  & 0.02 & \hphantom{-}--\hphantom{-}  & 0.51 & \hphantom{-}--\hphantom{-}  & 0.57 & \hphantom{-}--\hphantom{-}  & 0.48 & 0.06 & 0.59 & \hphantom{-}--\hphantom{-}  & \multicolumn{1}{c|}{0.92} & \hphantom{0.00}  & \hphantom{0.00}  & \hphantom{0.00}  & \hphantom{0.00}  & \hphantom{0.00}  & \hphantom{0.00}  \\
\cline{15 - 15}  15 & 1.88 & \hphantom{-}--\hphantom{-}  & \hphantom{-}--\hphantom{-}  & \hphantom{-}--\hphantom{-}  & 0.39 & \hphantom{-}--\hphantom{-}  & 0.46 & \hphantom{-}--\hphantom{-}  & 0.27 & \hphantom{-}--\hphantom{-}  & 0.33 & \hphantom{-}--\hphantom{-}  & 0.46 & \multicolumn{1}{c|}{\hphantom{-}--\hphantom{-} } & \hphantom{0.00}  & \hphantom{0.00}  & \hphantom{0.00}  & \hphantom{0.00}  & \hphantom{0.00}  \\
\cline{16 - 16}  16 & 1.85 & \hphantom{-}--\hphantom{-}  & \hphantom{-}--\hphantom{-}  & \hphantom{-}--\hphantom{-}  & 0.44 & \hphantom{-}--\hphantom{-}  & 0.49 & \hphantom{-}--\hphantom{-}  & 0.37 & \hphantom{-}--\hphantom{-}  & 0.43 & \hphantom{-}--\hphantom{-}  & 0.55 & \hphantom{-}--\hphantom{-}  & \multicolumn{1}{c|}{0.70} & \hphantom{0.00}  & \hphantom{0.00}  & \hphantom{0.00}  & \hphantom{0.00}  \\
\cline{17 - 17}  17 & 1.82 & \hphantom{-}--\hphantom{-}  & \hphantom{-}--\hphantom{-}  & \hphantom{-}--\hphantom{-}  & 0.35 & \hphantom{-}--\hphantom{-}  & 0.41 & \hphantom{-}--\hphantom{-}  & 0.19 & \hphantom{-}--\hphantom{-}  & 0.25 & \hphantom{-}--\hphantom{-}  & 0.33 & \hphantom{-}--\hphantom{-}  & \hphantom{-}--\hphantom{-}  & \multicolumn{1}{c|}{\hphantom{-}--\hphantom{-} } & \hphantom{0.00}  & \hphantom{0.00}  & \hphantom{0.00}  \\
\cline{18 - 18}  18 & 1.80 & \hphantom{-}--\hphantom{-}  & 0.03 & \hphantom{-}--\hphantom{-}  & 0.40 & \hphantom{-}--\hphantom{-}  & 0.45 & \hphantom{-}--\hphantom{-}  & 0.33 & 0.05 & 0.37 & \hphantom{-}--\hphantom{-}  & 0.44 & 0.04 & 0.43 & 0.18 & \multicolumn{1}{c|}{0.68} & \hphantom{0.00}  & \hphantom{0.00}  \\
\cline{19 - 19}  19 & 1.78 & \hphantom{-}--\hphantom{-}  & \hphantom{-}--\hphantom{-}  & \hphantom{-}--\hphantom{-}  & 0.32 & \hphantom{-}--\hphantom{-}  & 0.38 & \hphantom{-}--\hphantom{-}  & 0.17 & \hphantom{-}--\hphantom{-}  & 0.22 & \hphantom{-}--\hphantom{-}  & 0.29 & \hphantom{-}--\hphantom{-}  & \hphantom{-}--\hphantom{-}  & \hphantom{-}--\hphantom{-}  & \hphantom{-}--\hphantom{-}  & \multicolumn{1}{c|}{\hphantom{-}--\hphantom{-} } & \hphantom{0.00}  \\
\cline{20 - 20}  20 & 1.76 & \hphantom{-}--\hphantom{-}  & 0.01 & \hphantom{-}--\hphantom{-}  & 0.36 & \hphantom{-}--\hphantom{-}  & 0.41 & \hphantom{-}--\hphantom{-}  & 0.29 & 0.03 & 0.32 & \hphantom{-}--\hphantom{-}  & 0.37 & 0.01 & 0.31 & 0.08 & 0.41 & \hphantom{-}--\hphantom{-}  & \multicolumn{1}{c|}{0.61} \\
\hline
 $x_{a,b}$  &    0 &    1 &    2 &    3 &    4 &    5 &    6 &    7 &    8 &    9 &   10 &   11 &   12 &   13 &   14 &   15 &   16 &   17 &   \multicolumn{1}{c|}{  18} \\
\hline
\end{tabular}
}
\caption{\label{REALNS}
Approximative
largest positive real zeros of $
\Delta_{a,b}\left( x\right) $ for $0 \leq b < a-1 \leq 19
$.}
\end{table}


\end{document}